\newcommand{\C}{{\mathbb C}}
\newcommand{\Q}{{\mathbb Q}}
\newcommand{\Z}{{\mathbb Z}}
\newcommand{\N}{{\mathbb N}}
\newcommand{\R}{{\mathbb R}}
\newcommand{\dg}{\operatorname{dg}}
\newcommand{\dinv}{\operatorname{dinv}}
\newcommand{\Par}{\operatorname{Par}}
\newcommand{\sgn}{\operatorname{sgn}}
\theoremstyle{plain}
\newtheorem{thm}{Theorem}[section]
\newtheorem{lem}[thm]{Lemma}
\newtheorem{cor}[thm]{Corollary}
\newtheorem{prop}[thm]{Proposition}
\newtheorem{conj}[thm]{Conjecture}
\theoremstyle{definition}
\newtheorem{defn}[thm]{Definition}
\newtheorem{defnprop}[thm]{Definition-Proposition}
\newtheorem{exmp}[thm]{Example}
\numberwithin{equation}{section}
\theoremstyle{remark}
\newcommand{\dqbin}[3]{\displaystyle\genfrac{[}{]}{0pt}{}{#1}{#2}_{#3}}
\def\C{{\mathbb C}}
\def\D{{\mathfrak D}}
\def\N{{\mathbb N}}
\def\Q{{\mathbb Q}}
\def\Z{{\mathbb Z}}
\def\w{{\mathrm w}}
\def\bmx{\begin{bmatrix}}
\def\emx{\end{bmatrix}}
\def\LM{{\rm\scriptstyle{LM}}}
\def\area{{\rm area}}
\def\LM{{\rm\scriptstyle{LM}}}
\newcommand{\boxs}[1]
{ \multiput(#1)(10,0){2}
 {\line(0,10){10}}
\multiput(#1)(0,10){2}
 {\line(10,0){10}}
}
\begin{document}
\title{Limits of modified higher $q,t$-Catalan numbers}
\author{Kyungyong Lee, Li Li, and Nicholas A. Loehr}
\thanks{Research of K.L. is partially supported by NSF grant DMS 0901367.}
\thanks{This work was partially supported by a grant from the Simons
  Foundation (\#244398 to Nicholas Loehr).}

\address{Department of Mathematics, Wayne State University, Detroit, MI 48202}
\email{{\tt klee@math.wayne.edu}}
\address{Department of Mathematics and Statistics, Oakland University, Rochester, MI 48309}
\email{{\tt li2345@oakland.edu}}
\address{Department of Mathematics, Virginia Tech, Blacksburg, VA 24061}
\email{{\tt nloehr@math.vt.edu}}

\begin{abstract}
The $q,t$-Catalan numbers can be defined using rational functions,
geometry related to Hilbert schemes, symmetric functions, representation
theory, Dyck paths, partition statistics, or Dyck words.
After decades of intensive study, it was
eventually proved that all these definitions are equivalent. In this paper,
we study the similar situation for higher $q,t$-Catalan numbers, where 
the equivalence of the algebraic and combinatorial definitions is still conjectural. We compute
the limits of several versions of the modified higher $q,t$-Catalan numbers and
show that these limits equal the generating function for integer partitions.
We also identify certain coefficients of the
higher $q,t$-Catalan numbers as enumerating suitable integer partitions,
and we make some conjectures on the homological significance of
the Bergeron-Garsia nabla operator.

\end{abstract}
\maketitle

\section{Introduction}
The $q,t$-Catalan numbers and the higher $q,t$-Catalan numbers were
introduced by Garsia and Haiman in the study of symmetric functions
and Macdonald polynomials \cite{GH}. They are polynomials in
$\N[q,t]$ that refine the usual Catalan numbers $\frac{1}{n+1}\binom{2n}{n}$
and higher Catalan numbers $\frac{1}{mn+1}\binom{mn+n}{n}$.  For
a comprehensive introduction to $q,t$-Catalan numbers, the reader
is referred to the book of Haglund \cite{Hbook}. Besides the early
results of Haiman \cite{H98}, Haglund \cite{Ha}, and Garsia and
Haglund \cite{GHag}, there are several recent studies
focused in two directions:

$\bullet$ \emph{Various generalizations.}
Egge, Haglund, Killpatrick, and Kremer studied a generalization
of $q,t$-Catalan numbers obtained by replacing Dyck paths by Schr\"oder
paths~\cite{EHKK}. Loehr and Warrington~\cite{LW0} and Can and Loehr~\cite{CL}
considered the case where Dyck paths are replaced by lattice paths in
a square.  The generalized $q,t$-Fuss-Catalan numbers for finite
reflection groups have been investigated by Stump \cite{S}. Quite recently,
trivariate Catalan numbers defined using trivariate diagonal alternants
have been studied by F. Bergeron and Pr\'eville-Ratelle \cite{BP}.

$\bullet$ \emph{Structural features of the (higher) $q,t$-Catalan numbers.}
N. Bergeron, Descouens, and Zabrocki introduced a filtration of $q,t$-Catalan
numbers connected to the image of $k$-Schur functions under the
nabla operator~\cite{BDZ}. Relations between $q,t$-Catalan numbers and
partition numbers, as well as explicit constructions of the corresponding bases,
have been found by N. Bergeron and Chen~\cite{BC} and Lee and Li~\cite{LL}.
Certain open subvarieties of Hilbert schemes whose affine decompositions are
related to the (higher) $q,t$-Catalan numbers have been constructed by
Buryak~\cite{Bu}.  The significance of $q,t$-Catalan numbers in the study
of the compactified Jacobian of a rational singular curve was revealed
by Gorsky and Mazin \cite{GM}.

A main reason that the (higher) $q,t$-Catalan numbers have so many interesting generalizations and rich structure is because they have several (conjecturally) equivalent definitions that connect different fields of mathematics including combinatorics, symmetric functions, representation
theory, and geometry.  An unsettled conjecture states that definitions of higher $q,t$-Catalan numbers in different fields are all equivalent. Our first main result (Theorem 1.1) shows that all these definitions, after mild modification, have the same limit as $n$ approaches infinity.

Our second main result (Theorem \ref{thm:bound2}) studies the coefficients of the monomial $q^{d_1}t^{d_2}$ in the higher $q,t$-Catalan numbers when the total degree $d_1+d_2$ is close to the maximum possible value $m\binom{n}{2}$. These coefficients are surprisingly simple: they are equal to certain partition numbers. We also give a few conjectures in section 6 including conjectural
minimal generators and  minimal free resolutions for (powers of) diagonal ideals, which may provide a guideline for further exploration.

Before we give the precise statement of our main results, let us review the seven ways of defining the
higher $q,t$-Catalan numbers in (a)--(g) below.
\medskip
\begin{center}
\begin{figure}[h]
\setlength{\unitlength}{0.7pt}
\begin{picture}(300,160)
\put(0,0){\line(2,1){300}}
\put(0,150){\line(0,-1){150}} \put(0,154){$(0,n)$} \put(2,-10){$(0,0)$}
\put(0,50){\line(1,0){100}} \put(100,50){\line(0,+1){30}}
\put(100,80){\line(1,0){30}} \put(130,80){\line(0,+1){20}}
\put(130,100){\line(1,0){40}} \put(170,100){\line(0,+1){50}}
\put(0,150){\line(1,0){300}} \put(300,154){$(mn,n)$}
\put(60,130){\line(0,-1){15}}\put(75,130){\line(0,-1){15}}
\put(60,130){\line(1,0){15}}\put(60,115){\line(1,0){15}}
\put(63,120){$x$}
\put(67,115){\vector(0,-1){65}}\put(67,50){\vector(0,+1){65}}\put(70,83){$l(x)$}
\put(75,123){\vector(1,0){95}}\put(170,123){\vector(-1,0){95}}\put(100,109){$a(x)$}
\put(67,150){\vector(0,-1){20}}\put(67,130){\vector(0,+1){20}}\put(71,136){$l'(x)$}
\put(0,123){\vector(1,0){60}}\put(60,123){\vector(-1,0){60}}\put(15,109){$a'(x)$}
\end{picture}
\label{fig.1}
\caption{Definition of $l(x)$, $a(x)$, $l'(x)$, and $a'(x)$.}
\end{figure}
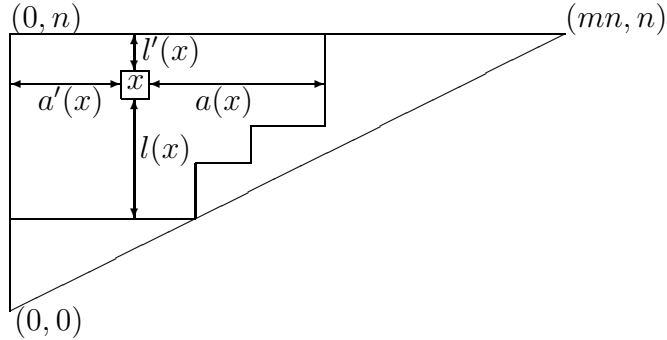
\end{center}
\noindent(a)
Suppose $\lambda$ is an integer partition with Ferrers diagram
$\dg(\lambda)$. Define $\area(\lambda)=|\lambda|=|\dg(\lambda)|$,
the number of cells in the diagram of $\lambda$.
For a cell $x\in\dg(\lambda)$, define the
\emph{leg} $l(x)$, the \emph{arm} $a(x)$,
the \emph{coleg} $l'(x)$, and the \emph{coarm} $a'(x)$
to be the distances shown in Figure~\ref{fig.1}.
Let $\Par_n^{(m)}$ be the set of partitions $\lambda$ such that
$\dg(\lambda)$ fits in the triangle with vertices $(0,0)$, $(0,n)$,
and $(mn,n)$ when drawn as shown in the figure. For such partitions,
define $\area^c(\lambda)=m\binom{n}{2}-\area(\lambda)$ and
$$c_m(\lambda)=|\{x\in\dg(\lambda): m l(x)\le a(x)\le m l(x)+m\}|.$$
For example, when $m=2$ and $\lambda=(7,5,4)$, we have
$|\lambda|=16$ and $c_2(\lambda)=13$.

Define the \emph{partition version of the higher $q,t$-Catalan numbers} by
\[ PC^{(m)}_n(q,t)=\sum_{\lambda\in\Par_n^{(m)}}
q^{\area^c(\lambda)}t^{c_m(\lambda)}. \]
For example, $PC^{(3)}_2(q,t)=q^3+q^2t+qt^2+t^3$ and
$$PC^{(2)}_3(q,t)=q^6+q^5t+q^4t^2+q^4t+q^3t^3+q^3t^2+q^2t^2+q^2t^3+qt^4+q^2t^4
+qt^5+t^6.$$

\noindent(b) An \emph{$m$-Dyck word} is a sequence
$\gamma=(\gamma_0,\gamma_1, \ldots,\gamma_{n-1})$
such that $\gamma_i\in\N=\{0,1,2,\ldots\}$,
$\gamma_0=0$, and $\gamma_{i+1}\leq \gamma_i+m$ for $0\leq i<n-1$.
Let $\Gamma_n^{(m)}$ be the set of $m$-Dyck words of length $n$.
For such a word $\gamma$, define $\area(\gamma)=\sum_{i=0}^{n-1} \gamma_i$.
As in~\cite{L}, define
${\rm dinv_m}(\gamma)=\sum_{0\le i<j<n} sc_m(\gamma_i-\gamma_j)$,
where $$
sc_m(p)=
\left\{
 \begin{array}{ll}
   m+1-p, &\hbox{if $1\le p\le m$};\\
   m+p,  &\hbox{if $-m\le p\le 0$};\\
   0, &\hbox{for all other $p$}.
 \end{array}
 \right.
$$
For example, $\gamma=(0,2,0,1,1)\in\Gamma_5^{(2)}$ has
$\area(\gamma)=4$ and $\dinv_2(\gamma)=13$.

Define the \emph{word version of the higher $q,t$-Catalan numbers} by
\[ WC^{(m)}_n(q,t)=\sum_{\gamma\in\Gamma_n^{(m)}}
  q^{\area(\gamma)}t^{\dinv_m(\gamma)}. \]

\noindent(c) An \emph{$m$-Dyck path of order $n$}
is a lattice path $\pi$ from $(0,0)$ to
$(mn,n)$ using north and east steps such that the path
never goes below the diagonal line segment with endpoints $(0,0)$ and
$(mn,n)$.  Let $\mathcal{D}^{(m)}_n$ be the set of such $m$-Dyck paths.
For such a path $\pi$, let $\area(\pi)$ be the number of complete
unit squares between $\pi$ and the diagonal. Define the \emph{$m$-bounce
statistic} $b_m(\pi)$ as follows.  Set $v_i = 0$ for all negative integers $i$.
Starting from $(0, 0)$, construct a \emph{bounce path} by induction
on $i\ge 0$. In the $(i+1)$th step, move north from the current
position $(u,v)$ until hitting an east step of the $m$-Dyck path
that starts on the line $x=u$, and define the distance traveled to be $v_i$.
Then move east from this position $v_i + v_{i-1} + \cdots + v_{i-m+1}$ units.
Continue bouncing until reaching $(mn, n)$. (In
fact, it suffices to stop once we reach the horizontal line $y = n$.)
Then $b_m(\pi)=\sum_{k\ge 0}kv_k.$  For example, the path
$\pi\in\mathcal{D}^{(2)}_5$ in Figure~\ref{fig:bounce} has $\area(\pi)=4$,
$(v_0,v_1,\ldots,v_5)=(2,0,1,1,1,0)$, and $b_2(\pi)=9$.

Define the \emph{Dyck path version of the higher $q,t$-Catalan numbers} by
\[ DC^{(m)}_n(q,t)=\sum_{\pi\in \mathcal{D}^{(m)}_n}
 q^{b_m(\pi)}t^{\area(\pi)}. \]
\begin{center}
\begin{figure}
 \setlength{\unitlength}{0.5pt}
\begin{picture}(300,150)
\put(0,0){\line(2,1){300}}
\put(0,150){\line(0,-1){150}}
\put(0,150){\line(1,0){300}}
{\linethickness{1.5pt}
\put(0,0){\line(0,1){61}}
\put(0,60){\line(1,0){121}} \put(120,60){\line(0,1){31}}
\put(120,90){\line(1,0){31}} \put(150,90){\line(0,1){31}}
\put(150,120){\line(1,0){61}} \put(210,120){\line(0,1){31}}
\put(210,150){\line(1,0){90}}
} %
\put(0,30){\line(1,0){60}}
\put(0,90){\line(1,0){180}}
\put(0,120){\line(1,0){240}}
\put(30,150){\line(0,-1){120}}
\put(60,150){\line(0,-1){120}}
\put(90,150){\line(0,-1){90}}
\put(120,150){\line(0,-1){90}}
\put(150,150){\line(0,-1){60}}
\put(180,150){\line(0,-1){60}}
\put(240,150){\line(0,-1){30}} %
\put(-60,-10){$(0,0)$}
\put(310,140){$(mn,n)$}
\end{picture}
\caption{An $m$-Dyck path.}
\label{fig:bounce}
\end{figure}
\end{center}

\noindent(d) The $q,t$-Catalan numbers may be defined using symmetric
functions, as follows. This discussion assumes the reader is familiar with the
elementary symmetric functions $e_n$, the modified Macdonald polynomials
$\tilde{H}_{\mu}$, and the Hall scalar product $\langle\cdot,\cdot\rangle$
on symmetric functions; see~\cite{Hbook} or~\cite[\S3.5.5]{H03} for details.
For any integer partition $\mu$, define $n(\mu)=\sum_{x\in\dg(\mu)} l(x)$
and $n(\mu')=\sum_{x\in\dg(\mu')} l(x)=\sum_{x\in\dg(\mu)} a(x)$, where
$\mu'$ denotes the transpose of $\mu$. Define $T_{\mu}=q^{n(\mu')}t^{n(\mu)}$.
Let $\Lambda$ denote the ring of symmetric functions with coefficients
in the field $F=\Q(q,t)$.  The Bergeron-Garsia \emph{nabla operator}~\cite{BG}
is the unique $F$-linear map $\nabla$ on $\Lambda$ that acts on the modified
Macdonald basis via $\nabla(\tilde{H}_{\mu})=T_{\mu}\tilde{H}_{\mu}$
for all partitions $\mu$. For $m\in\N^+=\{1,2,3,\ldots\}$,
$\nabla^m$ denotes the composition
of $m$ copies of the operator $\nabla$. We now define the
\emph{symmetric function version of the higher $q,t$-Catalan numbers} by
\[ SC_n^{(m)}(q,t)=\langle \nabla^m(e_n),e_n\rangle. \]

\noindent(e) The higher $q,t$-Catalan numbers were originally defined
by Garsia and Haiman in~\cite{GH} as sums of rational functions in $\Q(q,t)$
constructed from integer partitions.  Recall that $\mu\vdash n$ means that
$\mu$ is an integer partition of $n$. With $T_{\mu}$ defined as in (d),
we further define
$$
\aligned
& B_\mu=\sum_{x\in\dg(\mu)}q^{a'(x)}t^{l'(x)},
& \quad \Pi_\mu=\prod_{x\in\dg(\mu)\setminus\{(0,0)\}}(1-q^{a'(x)}t^{l'(x)}),
\\
& w_\mu=\prod_{x\in\dg(\mu)}[(q^{a(x)}-t^{l(x)+1})(t^{l(x)}-q^{a(x)+1})].\\
\endaligned
$$
Then the \emph{rational function version of the higher $q,t$-Catalan numbers}
is defined by
$$RC^{(m)}_n(q,t)=\sum_{\mu\vdash n} (1-q)(1-t) T_\mu^{m+1}B_\mu\Pi_\mu/w_\mu.$$

\noindent(f) For fixed $n\in\N^+$, consider the polynomial ring
$\C[\mathbf{x},\mathbf{y}]=\mathbb{C}[x_1,y_1,\cdots,x_n,y_n]$.
$S_n$ acts diagonally on this ring by the rule
$w\cdot x_i=x_{w(i)}$, $w\cdot y_i=y_{w(i)}$ for $w\in S_n$
and $1\leq i\leq n$. A polynomial $f\in\C[\mathbf{x},\mathbf{y}]$
is called \emph{alternating} iff $w\cdot f=\sgn(w)f$ for all $w\in S_n$.
Let $I$ be the ideal in $\C[\mathbf{x},\mathbf{y}]$
generated by all alternating polynomials, and let
$\mathfrak{m}$ be the maximal ideal generated by $x_1,y_1,\ldots,x_n,y_n$.
We write $I=I_n$ and $\mathfrak{m}=\mathfrak{m}_n$ if it is
necessary to indicate the number of variables.
Let $M^{(m)}=I^m/\mathfrak{m}I^m$ for $m\in\N$, and for simplicity, let $M=M^{(1)}$.
 Given a monomial $f=x_1^{a_1}y_1^{b_1}\cdots
x_n^{a_n}y_n^{b_n}\in\C[\textbf{x},\textbf{y}]$, we define the
\emph{bidegree of $f$} to be the ordered pair
$(\sum_{i=1}^n a_i,\sum_{i=1}^n b_i)$.
We say that a polynomial in $\C[\textbf{x},\textbf{y}]$ is
\emph{bihomogeneous of bidegree $(d_1,d_2)$} if all its monomials
have the same bidegree $(d_1,d_2)$. Then
$I^m$ and $M^{(m)}$ become doubly-graded $S_n$-modules
by taking bidegrees in the $x$-variables and the $y$-variables.
Let $M^{(m)}_{u,v}$ denote the bihomogeneous component of $M^{(m)}$
of bidegree $(u,v)$.
Define the \emph{algebraic version of the higher $q,t$-Catalan
numbers} by
$$AC_n^{(m)}(q,t)=\sum_{u\geq 0}\sum_{v\geq 0} q^u t^v \dim M_{u,v}^{(m)}.$$
For more information, see \cite[Section 3]{GH}.

\noindent(g) Finally we state the geometric definition
(see~\cite{H01,H02} for more details).
Let $Z_n$ be the zero fiber of the Hilbert-Chow morphism ${\rm
Hilb}^n(\C^2)\to{\rm Sym}^n(\C^2)$, let
$\mathcal{O}(1)$ be the restriction of the ample line bundle on
${\rm Hilb}^n(\C^2)$ induced by the isomorphism ${\rm
Hilb}^n(\C^2)\cong {\rm Proj}(T)$, where $T=\bigoplus_{d\ge 0} A^d$
and $A=\C[x_1,y_1,\dots,x_n,y_n]^\varepsilon$ is the
space of $S_n$-alternating elements. For any $m\in\N^+$, let $\mathcal{O}(m)=\mathcal{O}(1)^{\otimes m}$.
The set of global sections $H^0(Z_n,\mathcal{O}(m))$ is a bigraded vector
space. Define the \emph{geometric version of the higher $q,t$-Catalan numbers}
by $$GC^{(m)}_n(q,t)=\sum_{u,v}q^ut^v\dim H^0(Z_n,\mathcal{O}(m))_{u,v}.$$

It is conjectured that the seven definitions (a)--(g) of higher $q,t$-Catalan
numbers are all equivalent. This conjecture is supported by explicit
computations for small values of $m$ and $n$. It has been proved
that for all $m,n\in\N^+$,
\begin{equation}\label{eq:comb-eqs}
 PC_n^{(m)}(q,t)=WC_n^{(m)}(q,t)=DC_n^{(m)}(q,t) \mbox{ and }
\end{equation}
\begin{equation}\label{eq:alg-eqs}
 SC_n^{(m)}(q,t)=RC_n^{(m)}(q,t)=AC_n^{(m)}(q,t)=GC_n^{(m)}(q,t).
\end{equation}
We discuss the proofs of these equalities in the appendix (\S7).
It remains to be proved that the three combinatorial definitions agree
with the four algebraic and geometric definitions. This conjecture has
already been proved for certain specializations of the parameters $q$ and $t$.
For instance, using \cite[Theorem 4.4]{GH} and definitions (a) and (c)
above, we find that
\[ RC_n^{(m)}(q,1)=RC_n^{(m)}(1,q)
  =\sum_{\pi\in\mathcal{D}^{(m)}_n} q^{\area(\pi)}
  =PC_n^{(m)}(q,1)=DC_n^{(m)}(1,q). \]
Upon setting $t=1/q$, we see from~\cite[Corollary 4.1]{GH}
and \cite[\S3.3]{L} that
$$RC^{(m)}_n(q,1/q)q^{m\binom{n}{2}}
 =WC^{(m)}_n(q,1/q)q^{m\binom{n}{2}}
 =DC^{(m)}_n(q,1/q)q^{m\binom{n}{2}}=
\frac{1}{[mn+1]_q}\dqbin{mn+n}{n}{q}.$$

This paper studies the limiting behavior, as $n$ tends to infinity, of the ``modified''
higher $q,t$-Catalan numbers given by
$$q^{m\binom{n}{2}}PC_n^{(m)}(q^{-1},t), \quad
q^{m\binom{n}{2}}DC_n^{(m)}(t,q^{-1}),\mbox{ and }
q^{m\binom{n}{2}}AC_n^{(m)}(q^{-1},t).$$
We will show that all of these polynomials have as their limit
the famous generating function $\prod_{i=1}^{\infty} (1-tq^i)^{-1}$,
which enumerates integer partitions by area and number of parts.
(Here we are taking limits in a formal power series ring,
which means that for each fixed monomial $q^at^b$, the coefficient
of this monomial becomes stable for sufficiently large $n$.)
The following is our first main theorem, which is the combination of Proposition \ref{mainthm:part1}, Proposition \ref{mainthm:part2} and Corollary \ref{02102011vv}.
\begin{thm}\label{mainthm}
For any positive integer $m$, we have
$$\aligned
&\lim_{n\to\infty} q^{m\binom{n}{2}}PC_n^{(m)}(q^{-1},t)
=\lim_{n\to\infty} q^{m\binom{n}{2}}DC_n^{(m)}(q^{-1},t)
=\lim_{n\to\infty} q^{m\binom{n}{2}}AC_n^{(m)}(q^{-1},t)\\
&=\prod_{i=1}^\infty (1-tq^i)^{-1}
  =\sum_{\mu\in\Par} q^{\area(\mu)}t^{\ell(\mu)},
\endaligned
$$
where $\Par$ is the set of all integer partitions,
and $\ell(\mu)$ is the number of parts of $\mu$.
\end{thm}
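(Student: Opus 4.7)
The plan is to establish each of the three limits separately, showing each equals $\prod_{i\ge 1}(1-tq^i)^{-1}$; here $\lim_{n\to\infty}$ means stabilization of the coefficient of every fixed monomial $q^at^b$. For the $PC$ limit, the definition $\area^c(\lambda)=m\binom{n}{2}-\area(\lambda)$ together with the substitution $q\mapsto q^{-1}$ and multiplication by $q^{m\binom{n}{2}}$ yields
$$q^{m\binom{n}{2}}PC_n^{(m)}(q^{-1},t)=\sum_{\lambda\in\Par_n^{(m)}}q^{\area(\lambda)}t^{c_m(\lambda)}.$$
Since $\area(\lambda)$ and $c_m(\lambda)$ are intrinsic to $\lambda$ (independent of $n$), and any fixed $\lambda$ lies in $\Par_n^{(m)}$ as soon as $n\ge\ell(\lambda)+\lceil\lambda_1/m\rceil$, the coefficient of every $q^at^b$ stabilizes, and the limit equals $\sum_{\lambda\in\Par}q^{\area(\lambda)}t^{c_m(\lambda)}$. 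Because $PC_n^{(m)}=DC_n^{(m)}$ as polynomials by \eqref{eq:comb-eqs}, the $DC$ limit equals the $PC$ limit automatically.

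The main combinatorial task is then to prove the identity
$$\sum_{\lambda\in\Par}q^{\area(\lambda)}t^{c_m(\lambda)}=\prod_{i\ge 1}(1-tq^i)^{-1}=\sum_{\mu\in\Par}q^{\area(\mu)}t^{\ell(\mu)}.$$
My approach would be to construct an area-preserving bijection $\phi:\Par\to\Par$ with $\ell\circ\phi=c_m$, by selecting the $c_m(\lambda)$ cells satisfying $ml(x)\le a(x)\le ml(x)+m$ in a canonical order (for instance, reverse-reading the rows) and using their positions to encode the parts of $\phi(\lambda)$. An alternative analytic route is to match coefficients of $t^k$ against $q^k/(q;q)_k$ by a peeling recursion on $\lambda$: remove the last column (or last row), track the induced change in $c_m$, and show that the generating function satisfies the same $q$-hypergeometric recurrence as the right-hand side.

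For the $AC$ limit, the expansion
$$q^{m\binom{n}{2}}AC_n^{(m)}(q^{-1},t)=\sum_{u,v}\dim M^{(m)}_{u,v}\,q^{m\binom{n}{2}-u}t^v$$
reduces the problem to computing $\dim M^{(m)}_{m\binom{n}{2}-a,\,b}$ for fixed $a,b$ and large $n$. My plan is to invoke Theorem \ref{thm:bound2}, which identifies coefficients of $q^{d_1}t^{d_2}$ with $d_1+d_2$ close to the maximum $m\binom{n}{2}$ explicitly as partition counts; matching these against the coefficients of $\prod(1-tq^i)^{-1}=\sum_\mu q^{\area(\mu)}t^{\ell(\mu)}$ yields the $AC$ limit. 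The principal obstacle is the combinatorial identity linking $c_m$ and $\ell$, since $c_m$ is an intricate cell-counting statistic depending on $m$ and no direct bijection is immediately obvious; by contrast, the $AC$ step is comparatively routine once Theorem \ref{thm:bound2} is available.
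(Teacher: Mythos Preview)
Your treatment of the $PC$ limit is correct and matches the paper's Proposition~\ref{mainthm:part1} exactly, including the stabilization argument. However, the identity
\[
\sum_{\lambda\in\Par}q^{\area(\lambda)}t^{c_m(\lambda)}=\prod_{i\ge 1}(1-tq^i)^{-1}
\]
that you single out as the ``principal obstacle'' is in fact a known theorem of Loehr and Warrington (Theorem~\ref{thm:LW} in the paper, from~\cite{LW}): one checks that $c_m(\lambda)=h_m^+(\lambda)$ and quotes the result. Your sketched bijection and recurrence are not needed, and as described they are too vague to be a proof in any case. For the $DC$ limit, you appeal to $PC_n^{(m)}=DC_n^{(m)}$; the paper does the same implicitly but also supplies an independent bounce-path argument (Proposition~\ref{mainthm:part2}), remarking that logically your shortcut suffices.

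The genuine gap is in the $AC$ limit. You propose to invoke Theorem~\ref{thm:bound2} to read off $\dim M^{(m)}_{d_1,d_2}$ for large $n$, but in the paper's logical structure this is circular: the proof of Theorem~\ref{thm:bound2} (in \S5) explicitly uses Corollary~\ref{02102011vv}, which \emph{is} the $AC$ limit, to obtain the lower bound $\dim M^{(m)}_{d_1,d_2}\ge p(d_2,k)$. So you cannot assume Theorem~\ref{thm:bound2} when proving Theorem~\ref{mainthm}. The paper instead proceeds the other way around: it first proves an upper bound $\dim M^{(m)}_{d_1,d_2}\le p(d_2,k)$ directly (Theorem~\ref{thm:bound}), via an inductive argument on $m$ using staircase forms and the Transfactor Lemma, and then squeezes against the specialization $AC_n^{(m)}(q,1)=\sum_{\pi}q^{\area(\pi)}$ together with the partition identity of Lemma~\ref{lem:sum p} to force equality. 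In short, you have the difficulty backwards: the combinatorial identity is already in the literature, and the $AC$ step is where the new work lies.
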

Thus we have the following corollary using (\ref{eq:comb-eqs}) and (\ref{eq:alg-eqs}).
\begin{cor}
For any positive integer $m$, we have
$$\aligned &\phantom{=\;}\lim_{n\to\infty} q^{m\binom{n}{2}}PC_n^{(m)}(q^{-1},t)
=\lim_{n\to\infty} q^{m\binom{n}{2}}WC_n^{(m)}(q^{-1},t)
=\lim_{n\to\infty} q^{m\binom{n}{2}}GC_n^{(m)}(q^{-1},t)\\
&=\lim_{n\to\infty} q^{m\binom{n}{2}}SC_n^{(m)}(q^{-1},t)
=\lim_{n\to\infty} q^{m\binom{n}{2}}RC_n^{(m)}(q^{-1},t)
=\lim_{n\to\infty} q^{m\binom{n}{2}}AC_n^{(m)}(q^{-1},t)\\
&=\lim_{n\to\infty} q^{m\binom{n}{2}}DC_n^{(m)}(t,q^{-1})
=\prod_{i=1}^\infty (1-tq^i)^{-1}
  =\sum_{\mu\in\Par} q^{\area(\mu)}t^{\ell(\mu)}.
 \endaligned$$
\end{cor}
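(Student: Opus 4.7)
The plan is to deduce all six additional limit identities from Theorem~\ref{mainthm}, using \eqref{eq:comb-eqs} and \eqref{eq:alg-eqs} to propagate the result to the various equivalent versions.

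First, by \eqref{eq:comb-eqs} we have $PC_n^{(m)}(q,t)=WC_n^{(m)}(q,t)=DC_n^{(m)}(q,t)$ as polynomials, so the substitution $q\mapsto q^{-1}$ followed by multiplication by $q^{m\binom{n}{2}}$ preserves this equality term by term; hence Theorem~\ref{mainthm}'s limit for $PC_n^{(m)}(q^{-1},t)$ transports immediately to $WC_n^{(m)}(q^{-1},t)$. Similarly, \eqref{eq:alg-eqs} gives $SC_n^{(m)}=RC_n^{(m)}=AC_n^{(m)}=GC_n^{(m)}$ as rational functions in $q,t$, so the same substitution transports the $AC$ limit of Theorem~\ref{mainthm} to $SC$, $RC$, and $GC$ at $(q^{-1},t)$.

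The only substantive remaining limit is $\lim_{n\to\infty} q^{m\binom{n}{2}} DC_n^{(m)}(t,q^{-1})$, where $q$ and $t$ appear in swapped positions; this cannot be obtained from Theorem~\ref{mainthm} by a mere substitution, because the $q,t$-symmetry of $DC$ is conjectural for $m\ge 2$. This case is however already established as Proposition~\ref{mainthm:part2}, one of the three ingredients of Theorem~\ref{mainthm}. My approach to that proposition would be to use the standard bijection between $m$-Dyck paths $\pi\in\mathcal{D}^{(m)}_n$ and partitions $\lambda_\pi\in\Par_n^{(m)}$ (where $\lambda_\pi$ records the cells above $\pi$ inside the triangle, so that $|\lambda_\pi|=m\binom{n}{2}-\area(\pi)$) to rewrite
\[
q^{m\binom{n}{2}} DC_n^{(m)}(t,q^{-1}) = \sum_{\lambda\in\Par_n^{(m)}} q^{|\lambda|}\,t^{b_m(\pi_\lambda)},
\]
and then show that for each fixed partition $\mu$, once $n$ is large enough that $\mu\in\Par_n^{(m)}$ with room to spare, $b_m(\pi_\mu)=\ell(\mu)$. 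Tracing the bounce algorithm: the first north move has length $v_0=n-\ell(\mu)$ (the height of the leftmost wall of $\mu$); the ensuing east move lands at $(n-\ell(\mu),n-\ell(\mu))$, which for large $n$ lies strictly to the right of $\mu$; and the next north move covers $v_1=\ell(\mu)$ steps before reaching $y=n$, where the bounce terminates. Hence $b_m(\pi_\mu)=1\cdot v_1=\ell(\mu)$, and summing over all partitions gives $\sum_{\mu\in\Par} q^{|\mu|}t^{\ell(\mu)} = \prod_{i=1}^\infty(1-tq^i)^{-1}$.

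The main obstacle is the verification that $b_m(\pi_\mu)=\ell(\mu)$ for $n$ large, which reduces to an elementary case analysis of the bounce path for a Dyck path whose upper boundary is a small partition embedded in a large triangle; the first two reductions are purely formal substitutions.
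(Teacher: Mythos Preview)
Your proposal is correct and matches the paper's approach: the Corollary is deduced from Theorem~\ref{mainthm} together with \eqref{eq:comb-eqs} and \eqref{eq:alg-eqs}, exactly as you do for $WC$, $SC$, $RC$, $GC$, and the $DC_n^{(m)}(t,q^{-1})$ limit is Proposition~\ref{mainthm:part2}, one of the three constituents of Theorem~\ref{mainthm}. Your sketch of that proposition---identifying $v_0=n-\ell(\mu)$, $v_1=\ell(\mu)$, and $v_i=0$ for $i\ge 2$ once the first horizontal bounce clears the partition---is the same argument the paper gives (the paper makes the threshold explicit as $n\ge 2\area^c(\pi)$).
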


The result for $AC^{(1)}_n$ can also be obtained from a result of
N. Bergeron and Chen \cite[Corollary 8.3]{BC}.

Our second main theorem identifies the dimensions of $M^{(m)}_{d_1,d_2}$, which are the coefficients of certain
terms $q^{d_1}t^{d_2}$ in $AC^{(m)}_n(q,t)$, as partition numbers. The partition number  $p(\delta,k)$ is the number of partitions of $k$ into at most $\delta$ parts.
By convention, $p(0,k)=0$ for $k>0$, and $p(\delta,0)=1$
for $\delta\ge 0$.

\begin{thm}\label{thm:bound2} Let $n\ge6$ and $m$ be positive integers,
and let $k, d_1, d_2$ be nonnegative integers such that
$k=m\binom{n}{2}-d_1-d_2\le n-6$. Let $\delta=\min(d_1,d_2)$. Then
$$\dim M_{d_1,d_2}^{(m)}=p(\delta,k).$$
\end{thm}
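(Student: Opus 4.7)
The plan is to identify $M^{(m)}_{d_1,d_2}$ with a space spanned by explicit alternating polynomials indexed by partitions of $k$ with at most $\delta$ parts, and then establish matching upper and lower bounds on its dimension. Without loss of generality assume $\delta = d_2 \le d_1$. The construction starts from an ``extremal'' bidegree $(m\binom{n}{2},0)$ element of $I^m$, namely a suitable $m$-fold product of Slater-type determinants in the $x$-variables (built from the staircase configuration $\{(0,0),(1,0),\ldots,(n-1,0)\}$). For each partition $\nu=(\nu_1\ge\cdots\ge\nu_\ell)$ of $k$ with $\ell\le\delta$, I would twist this extremal element by an exponent-shift pattern that moves $\delta$ units of total degree from the $x$-variables to the $y$-variables and then removes a further $k$ units of $x$-degree in a shape dictated by $\nu$. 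This produces a bihomogeneous alternating polynomial $F_\nu\in I^m$ of bidegree $(d_1,d_2)$.

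Next I would prove that the classes $\overline{F_\nu}$ span the bihomogeneous component $M^{(m)}_{d_1,d_2}$ by a Pl\"ucker/Garnir-style reduction: any bihomogeneous element of $I^m$ in bidegree $(d_1,d_2)$ is a sum of products of Slater determinants and ordinary polynomials, and such an expression can be rewritten modulo $\mathfrak{m}I^m$ as a linear combination of the $F_\nu$. The hypothesis $k\le n-6$ is precisely what guarantees enough ``room'' in the staircase for the reduction to terminate without introducing boundary obstructions; for smaller $n$ relative to $k$, extra minimal generators appear that are not captured by partitions with at most $\delta$ parts. The case $m=1$ of this step follows the template used by Bergeron--Chen and Lee--Li, and the main added work is to propagate the argument through the $m$-fold products.

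The main obstacle is proving linear independence of the $\overline{F_\nu}$, that is, the matching lower bound $\dim M^{(m)}_{d_1,d_2}\ge p(\delta,k)$. My plan is to exploit the chain of equalities $AC^{(m)}_n = SC^{(m)}_n = RC^{(m)}_n$ recorded in (\ref{eq:alg-eqs}) and analyze the rational-function expression for $RC^{(m)}_n$ termwise: for each $\mu\vdash n$, expand the contribution to the coefficient of $q^{d_1}t^{d_2}$ in powers of $q,t$, and show that within the stability range $k\le n-6$ all contributions from non-hook-like $\mu$ cancel, leaving exactly $p(\delta,k)$. An alternative route is to extract an effective stabilization rate from the proof of Theorem~\ref{mainthm}: once we know the limit coefficient is $p(\delta,k)$ and we control the error terms, the bound $k\le n-6$ is the quantitative version of stabilization. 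Either approach pins down the remaining inequality, completing the proof.
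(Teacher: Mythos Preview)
Your two-step architecture (spanning set indexed by $\Par(\delta,k)$ for the upper bound, then independence for the lower bound) matches the paper, but both halves have genuine gaps.

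For the upper bound, the phrase ``Pl\"ucker/Garnir-style reduction'' hides the real difficulty. When $k$ and $d_2$ are both small (say $k+d_2<n/2-1$), a reduction of the kind you describe does go through and is already Theorem~\ref{thm:bound}. The whole point of Theorem~\ref{thm:bound2} is to drop the smallness hypothesis on $d_2$: here $d_2$ may be as large as $m\binom{n}{2}/2$. In that regime your ``twist the all-$x$ extremal element'' construction still produces candidates $F_\nu$, but there is no evident termination for the straightening, and you give no mechanism for it. The paper supplies one: two structural lemmas for $M^{(2)}$ (the Grafting Lemma~\ref{lem:grafting} and the Higher Transfactor Lemma~\ref{lem:higher transfactor}) show that every product $M_{d'_1,d'_2}\cdot M_{d''_1,d''_2}$ lands in a single subspace $N_{d_1,d_2}$ of the form $M_{\ast,\ast}\cdot f_{\ast,\ast}$. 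Iterating collapses $M^{(m)}_{d_1,d_2}$ to $M_{d_1-a,d_2-b}\cdot g_{a,b}$ for a specific $(a,b)$ with $a+b=(m-1)\binom{n}{2}$, and then the $m=1$ bound (Theorem~\ref{thm:LL}) finishes. Your sketch has no substitute for the grafting step.

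For the lower bound, option (a) --- a termwise analysis of $RC^{(m)}_n$ showing cancellation from non-hook $\mu$ --- would require controlling an alternating sum of rational functions uniformly in $m$; nothing like this is known, and it is far harder than what is actually needed. Option (b) is the right instinct but ``control the error terms'' is not a proof. The paper makes the stabilization idea precise and algebraic: pick $\tilde n\gg n$ so that $k,d_2<\tilde n/2-1$, where Corollary~\ref{02102011vv} already gives $\dim\tilde M^{(m)}_{\tilde d_1,d_2}=p(\delta,k)$ on the nose. Then construct an explicit map $M^{(m)}_{d_1,d_2}\to\tilde M^{(m)}_{\tilde d_1,d_2}$, multiplication by $\tilde f_0^{\,m}=\prod_{j=n+1}^{\tilde n}\prod_{i<j}(x_j-x_i)^m$, and prove it is \emph{surjective} via a commutative diagram whose key ingredient is Lemma~\ref{lem:injective} (injectivity of multiplication by $\tilde f_0$ on the $m=1$ piece, proved through the map $\bar\varphi$ of \cite{LL}). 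This concrete comparison map is the idea your proposal is missing.
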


The paper is organized as follows. 
In \S2 we prove the combinatorial part of our main theorem.
In \S3 we introduce further notation, background, and preliminary results.
In \S4 we prove the algebraic part of the main theorem.
In \S5, we extend the method used in \S4 to prove Theorem \ref{thm:bound2}.  In \S6 we give some related conjectures.
In \S7 we indicate the proofs of the equalities stated
in~\eqref{eq:comb-eqs} and~\eqref{eq:alg-eqs}.

\noindent \emph{Acknowledgements.}  We are grateful to Drew Armstrong, Nantel Bergeron, Fran\c{c}ois Bergeron, Alex Woo, and Alex Yong for helpful suggestions and correspondence.
The authors are also grateful to the
anonymous referee for many useful comments.

\section{Limits of the Modified Combinatorial Higher $q,t$-Catalan Numbers}
In this section, we study the limiting behavior of the modified $PC^{(m)}_n$ and $DC^{(m)}_n$. Even though logically it suffices to study one of them because they are equal (see \eqref{eq:comb-eqs}), we feel that both proofs have their own interest to be presented here.  We first recall the following theorem~\cite[Thm. 3]{LW}.
\begin{thm}\label{thm:LW}
For $\lambda\in\Par$ and $m\in\R^+$, define $h^+_m(\lambda)$ to
be the number of cells $x\in\dg(\lambda)$ such that
$\frac{a(x)}{l(x)+1}\le m<\frac{a(x)+1}{l(x)}$. Then
$$
\sum_{\lambda\in{\rm Par}} q^{\area(\lambda)}t^{h^+_m(\lambda)}=\prod_{i=1}^\infty\frac{1}{1-tq^i}=\sum_{\mu\in{\rm Par}} q^{\area(\mu)}t^{\ell(\mu)}.
$$
\end{thm}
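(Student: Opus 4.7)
The plan is to prove the identity bijectively. Since $\prod_{i\ge 1}(1-tq^i)^{-1}=\sum_{\mu\in\Par}q^{|\mu|}t^{\ell(\mu)}$ is the standard generating function for partitions tracked by area and number of parts, it suffices to construct a bijection $\Phi:\Par\to\Par$ satisfying $\area(\Phi(\lambda))=\area(\lambda)$ and $\ell(\Phi(\lambda))=h^+_m(\lambda)$. The identity then follows term by term.

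First I would analyze the distinguished set $H_m(\lambda):=\{x\in\dg(\lambda)\,:\,a(x)/(l(x)+1)\le m<(a(x)+1)/l(x)\}$ geometrically. The two inequalities pin $x$ to a ``slope-$m$ corner'' of its arm-leg rectangle; the structural claim to prove is that for each integer $\ell\ge 0$ at most one cell with leg $\ell$ belongs to $H_m(\lambda)$, so that the distinguished cells form a staircase inside $\lambda$ tracing a line of slope $m$. Using this staircase, I would decompose $\dg(\lambda)=S_1\sqcup\cdots\sqcup S_h$, where $h=h^+_m(\lambda)$ and each strip $S_i$ is anchored at one cell of $H_m(\lambda)$. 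Sorting the multiset $(|S_1|,\ldots,|S_h|)$ in weakly decreasing order defines $\Phi(\lambda)\in\Par$, and by construction $\area(\Phi(\lambda))=\area(\lambda)$ and $\ell(\Phi(\lambda))=h=h^+_m(\lambda)$. Bijectivity would be established by writing down an explicit inverse: from $\mu=(\mu_1\ge\cdots\ge\mu_h)$, reassemble $\lambda$ by gluing rectangles of sizes $\mu_i$ along the staircase corners, checking that each piece contributes exactly one $m$-special cell.

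The hard part is the strip-decomposition step: I must design an assignment of cells to strips that is simultaneously exhaustive, invertible, and works uniformly for all real $m>0$. A promising fallback, which may sidestep the need to construct $\Phi$ explicitly, is the following invariance argument. At $m\to 0^+$, the set $H_m(\lambda)$ reduces to the rightmost cell of each row, so $h^+_m(\lambda)=\ell(\lambda)$ and the identity becomes the tautology $\sum_\lambda q^{|\lambda|}t^{\ell(\lambda)}=\sum_\mu q^{|\mu|}t^{\ell(\mu)}$. Since the left-hand side is locally constant in $m$ away from rational thresholds $r=p/q$, it would then suffice to show invariance as $m$ crosses each such $r$: by the defining inequalities, the only cells whose $H_m$-status changes at $r$ satisfy $a/(l+1)=r$ or $(a+1)/l=r$, and a careful accounting should show these contributions cancel in the generating function. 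This reduces the problem to an elementary bijection between two explicit finite sets of cells rather than a global strip decomposition.
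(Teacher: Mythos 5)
The paper does not prove this theorem; it is quoted from Loehr and Warrington \cite{LW}, where it is the main result of a full-length article, so there is no internal proof to compare against and your proposal must stand on its own. It does not, because the structural claim on which your primary (staircase/strip) construction rests is false. For a fixed leg value $\ell$ the defining inequalities read $m\ell-1<a(x)\le m(\ell+1)$, an interval containing about $m+1$ integers, and many different cells of $\dg(\lambda)$ share the same leg. Concretely, for $\lambda=(2,2)$ and $m=1$ both cells of the bottom row have leg $0$ (with arms $0$ and $1$) and both satisfy the condition, so $H_1(\lambda)$ contains two cells of leg $0$; indeed $h_1^+((2,2))=3$, so the distinguished cells cannot be matched one-to-one with leg values or with rows. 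More drastically, for $m\ge\lambda_1$ every leg-$0$ cell lies in $H_m(\lambda)$, giving $\lambda_1$ cells of the same leg. Hence $H_m(\lambda)$ is not a staircase with at most one cell per leg, and the strip decomposition $\dg(\lambda)=S_1\sqcup\cdots\sqcup S_h$ anchored at those cells has no footing.

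The fallback is not a proof either but a restatement of the difficulty. The reduction to the base case $m\to0^+$ and to invariance across rational thresholds is sound as far as it goes (for fixed $n$ the distribution of $h_m^+$ on partitions of $n$ is piecewise constant in $m$), but the invariance at a threshold is emphatically not a cell-level cancellation inside a single $\lambda$: the value $h_m^+(\lambda)$ of an individual partition genuinely jumps, e.g.\ $h_{0.1}^+((3,1))=2$ while $h_1^+((3,1))=4$, and $h_{0.1}^+((2,2))=2$ while $h_1^+((2,2))=3$. Only the multiset $\{h_m^+(\lambda):\lambda\vdash n\}$ is invariant, so the required ``careful accounting'' is a bijection (or involution) on the set of partitions of each $n$ that redistributes the statistic globally across different partitions --- and constructing such bijections is precisely the content of \cite{LW}. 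As written, your argument reduces the theorem to itself.
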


\begin{prop}\label{mainthm:part1} {\rm (i)} For all $m,n\in\N^+$,
$$q^{m\binom{n}{2}}PC_n^{(m)}(q^{-1},t)
=\sum_{\lambda\in\Par_n^{(m)}}q^{\area(\lambda)}t^{c_m(\lambda)}.$$

{\rm (ii)} For all $m\in\N^+$,
$$\lim_{n\to\infty} q^{m\binom{n}{2}}PC_n^{(m)}(q^{-1},t)
=\sum_{\lambda\in\Par}
q^{\area(\lambda)}t^{c_m(\lambda)}=\prod_{i=1}^\infty \frac{1}{1-tq^i}.$$
\end{prop}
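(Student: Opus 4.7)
The approach is to reduce the proposition to Theorem \ref{thm:LW} after matching up the statistics.

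Part (i) should be a one-line bookkeeping calculation. Substituting the definition $PC_n^{(m)}(q,t) = \sum_{\lambda \in \Par_n^{(m)}} q^{\area^c(\lambda)} t^{c_m(\lambda)}$ into the left-hand side with $q$ replaced by $q^{-1}$, and using $\area^c(\lambda) = m\binom{n}{2} - \area(\lambda)$, the prefactor $q^{m\binom{n}{2}}$ precisely cancels $q^{-\area^c(\lambda)}$ to leave $q^{\area(\lambda)}$, yielding the claimed identity.

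For part (ii), the plan has two conceptual steps. First (stability), I would show that the right-hand side of (i) converges, as $n\to\infty$ in the formal power series topology, to the sum $\sum_{\lambda\in\Par} q^{\area(\lambda)} t^{c_m(\lambda)}$ over all integer partitions. The key observation is that both $\area(\lambda)$ and $c_m(\lambda)$ depend only on the diagram of $\lambda$ and not on the ambient $n$, while any fixed partition with $\area(\lambda)\le a$ has at most $a$ rows and largest part at most $a$; an elementary check shows such a diagram fits inside the triangle with vertices $(0,0),(0,n),(mn,n)$ for all $n$ above some threshold $N(a)$. Hence the coefficient of any monomial $q^a t^b$ in $\sum_{\lambda\in\Par_n^{(m)}} q^{\area(\lambda)} t^{c_m(\lambda)}$ stabilizes for $n\ge N(a)$ to the corresponding coefficient of the sum over all of $\Par$. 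Second (identification of statistics), I would verify that $c_m(\lambda) = h^+_m(\lambda)$ for every $\lambda$: the inequality $\tfrac{a(x)}{l(x)+1}\le m$ is equivalent to $a(x)\le m l(x)+m$, and $m<\tfrac{a(x)+1}{l(x)}$ is equivalent to $m l(x)\le a(x)$ (with $l(x)=0$ giving the trivial conditions $a(x)\ge 0$ on both sides). With these two reductions in hand, Theorem \ref{thm:LW} applied to $h^+_m$ delivers both the product formula and the second expression on the right.

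The main obstacle, such as it is, lies in the stability step—one must carefully inspect the exact triangle-containment condition defining $\Par_n^{(m)}$ and confirm that every sufficiently small partition eventually lies in it. I expect this to be straightforward once translated into the inequality $\lambda_i\le m(n-i+1)$ or its transpose equivalent, but it is the only place where one is doing honest work rather than unwinding definitions.
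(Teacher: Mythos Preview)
Your proposal is correct and follows essentially the same route as the paper: part (i) is declared straightforward, and part (ii) is obtained by the same two ingredients you identify—stability of the coefficients as $n\to\infty$ (since $\area$ and $c_m$ are intrinsic to $\lambda$ and every partition of bounded area eventually fits in the triangle) together with the identification $c_m(\lambda)=h^+_m(\lambda)$, after which Theorem~\ref{thm:LW} finishes the job. Your write-up is in fact more explicit than the paper's on both the stability step and the verification that the two statistics agree.
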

\begin{proof} (i) is straightforward. For (ii),
if we increase $n$ by $1$, a partition $\lambda$ in the $mn\times n$ triangle
will also fit into the $m(n+1)\times (n+1)$ triangle, and the two statistics
$\area(\lambda)$ and $c_m(\lambda)$ do not change with $n$.
Since all integer partitions of a fixed area will fit in the triangle
for sufficiently large $n$,
the first equality follows from (i).
The second equality follows from Theorem~\ref{thm:LW}
and the observation that $h_m^+(\lambda)=c_m(\lambda)$.
\end{proof}

\begin{prop}\label{mainthm:part2} {\rm (i)}
For all $m,n\in\N^+$,
$$q^{m\binom{n}{2}}DC_n^{(m)}(t,q^{-1})
=\sum_{\pi\in\mathcal{D}^{(m)}_n}q^{\area^c(\pi)}t^{b_m(\pi)},$$
where $\area^c(\pi)$ is the number of lattice squares in the
$mn\times n$ triangle above $\pi$.

{\rm (ii)} For all $m\in\N^+$,
$$\lim_{n\to\infty} q^{m\binom{n}{2}}DC_n^{(m)}(t,q^{-1})
=\sum_{\lambda\in\Par}q^{\area(\lambda)}t^{\ell(\lambda)}
=\prod_{i=1}^\infty \frac{1}{1-tq^i}.$$
\end{prop}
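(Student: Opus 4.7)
The plan is to verify (i) by direct substitution and then deduce (ii) by a stabilization argument. For (i), every unit cell of the $mn\times n$ triangle sits either strictly below or strictly above $\pi$, so $\area(\pi)+\area^c(\pi)=m\binom{n}{2}$. Substituting this identity in the definition of $DC_n^{(m)}(t,q^{-1})$ and multiplying by $q^{m\binom{n}{2}}$ rewrites $q^{-\area(\pi)}$ as $q^{\area^c(\pi)-m\binom{n}{2}}$, yielding the claimed formula.

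For (ii), I would assign to each $\pi\in\mathcal{D}_n^{(m)}$ the partition $\lambda=\lambda(\pi)\in\Par_n^{(m)}$ whose Young diagram consists of the cells of the triangle lying above $\pi$. This is a bijection, and $|\lambda|=\area^c(\pi)$. Since every fixed partition $\lambda$ belongs to $\Par_n^{(m)}$ once $n$ is large, it suffices to prove the structural claim below and then take the formal limit in Part(ii) of Proposition~\ref{mainthm:part1}-style.

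\emph{Structural claim.} If $n-\ell(\lambda)>\lambda_1$, then the bounce sequence of $\pi=\pi(\lambda)$ is $(v_0,v_1,v_2,\dots)=(n-\ell(\lambda),\,\ell(\lambda),\,0,\,0,\dots)$, so that $b_m(\pi)=\ell(\lambda)$. To prove this, I would track the bounce traveller step by step. Write $\ell=\ell(\lambda)$. The size hypothesis forces $\pi$ to run straight north from $(0,0)$ for $n-\ell$ steps before its first east step, so $v_0=n-\ell$. After the first turn, the traveller is at $(n-\ell,n-\ell)$; the inequality $\lambda_1<n-\ell$ ensures that the vertical line $x=n-\ell$ meets no east step of $\pi$ below height $n$, so the traveller must ascend all the way to $y=n$, yielding $v_1=\ell$. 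Thereafter the traveller is pinned to $y=n$, so every subsequent $v_i$ vanishes, and one checks by induction that the east increments $v_i+v_{i-1}+\cdots+v_{i-m+1}$ telescope to carry the traveller exactly to $(mn,n)$.

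Combining (i) with the claim, for each monomial $q^at^b$ and all sufficiently large $n$, its coefficient in $q^{m\binom{n}{2}}DC_n^{(m)}(t,q^{-1})$ equals $|\{\lambda\in\Par:|\lambda|=a,\,\ell(\lambda)=b\}|$. The formal limit is thus $\sum_{\lambda\in\Par}q^{|\lambda|}t^{\ell(\lambda)}$, which equals $\prod_{i=1}^{\infty}(1-tq^i)^{-1}$ by the classical identity for partitions graded by area and number of parts. The main obstacle is the structural claim: the east-move rule $v_i+v_{i-1}+\cdots+v_{i-m+1}$ couples several previous bounce values, so some care is needed to verify that once the traveller reaches the top edge, these increments really do account for the entire remaining horizontal distance without triggering any further north excursion.
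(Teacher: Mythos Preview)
Your proof is correct and follows essentially the same route as the paper's. Both arguments identify $\pi\leftrightarrow\lambda(\pi)$, observe that $\area^c(\pi)=|\lambda|$, and show that for large $n$ the bounce path has $v_0=n-\ell(\lambda)$, $v_1=\ell(\lambda)$, and $v_i=0$ thereafter, giving $b_m(\pi)=\ell(\lambda)$; the only cosmetic difference is the sufficient threshold for stabilization---you use $n-\ell(\lambda)>\lambda_1$, while the paper uses $n\geq 2|\lambda|$---and both conditions guarantee that the first east move of the bounce path lands to the right of every cell of $\lambda$.
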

\begin{proof} (i) is straightforward.
For (ii), note that each $m$-Dyck path $\pi\in\mathcal{D}_n^{(m)}$
determines an integer partition $\lambda=\lambda(\pi)$ whose diagram
consists of the squares above $\pi$ in the $mn\times n$ triangle.
For each $n$, there is an injection $\mathcal{D}_n^{(m)}
\rightarrow \mathcal{D}_{n+1}^{(m)}$ that adds one north step to the beginning
of $\pi$ and adds $m$ east steps to the end of $\pi$. This injection
preserves both $\lambda(\pi)$ and $\area^c(\pi)=\area(\lambda(\pi))$,
but the value of the bounce statistic $b_m(\pi)$ may change.
However, as $n$ continues to increase, the
bounce statistic will eventually stabilize. More specifically, once
$n\geq 2\area^c(\pi)$, it is routine to check that the bounce path
will satisfy $v_0=n-\ell(\lambda(\pi))$, $v_1=\ell(\lambda(\pi))$,
and $v_i=0$ for all $i>2$; the key observation is that the first
horizontal move (of length $v_0$) moves to the right of all the
squares in $\dg(\lambda(\pi))$.  It follows that $b_m(\pi)=\ell(\lambda(\pi))$
for such $n$. By fixing the area of the partition outside $\pi$ and
taking $n$ larger than twice this area, we see as in the previous
proposition that the indicated limit holds.
\end{proof}

\section{Notation and background for $AC^{(m)}_n$}
\subsection{Notation}
\begin{itemize}
\item For $k,b\in \N^+$, denote the set of integer partitions of $k$
by $\Par(k)$, and denote the set of integer partitions of $k$ into at most
$b$ parts by $\Par(b,k)$. More explicitly,
$\Par(k) =\{\nu=(\nu_1,\nu_2,\dots,\nu_\ell)|\, \nu_i\in\N^+,\,
\nu_1\le\nu_2\le\cdots\le\nu_\ell,  \,\nu_1+\nu_2+\cdots+\nu_\ell=k\}$ and
$\Par(b,k) =\{\nu=(\nu_1,\nu_2,\dots,\nu_\ell)\in\Par(k)| \,\ell\le b\}$.
By convention, $\Par(0)=\{0\}$, $\Par(0,k)=\emptyset$ for $k>0$,
and $\Par(h,0)=\{0\}$ for all
$h\ge 0$ (where $\{0\}$ is a set with one element).
Let $p(k)$ and $p(b,k)$ be the cardinalities of $\Par(k)$ and $\Par(b,k)$,
respectively. In other words, $p(k)$ is the number of partitions of $k$
and $p(h,k)$ is the number of partitions of $k$ into at most $h$ parts.
By the above conventions, $p(0)=1$, $p(0,k)=0$ for $k>0$, and $p(h,0)=1$
for all $h\ge 0$.

\item Let $\C[\mathbf{\rho}]=\C[\rho_1,\rho_2,\dots]$ be the polynomial
ring with countably many variables $\rho_i$, for $i\in\N^+$.
As a convention, we set $\rho_0=1$. For a partition
$\nu=(\nu_1,\nu_2,\dots,\nu_\ell)\in\Par(k)$, define
$\rho_\nu=\rho_{\nu_1}\rho_{\nu_2}\cdots\rho_{\nu_\ell}\in \C[\mathbf{\rho}]$.
Define the \emph{weight} of a monomial $c\rho_{i_1}\cdots\rho_{i_\ell}$
(where $c\in\C\setminus\{0\}$) to be
$i_1+\dots+i_\ell$. For $w\in\N$, define $\C[\mathbf{\rho}]_w$ to be
the subspace of $\C[\mathbf{\rho}]$ spanned by monomials of weight $w$. For $f\in \C[\rho]$, there is a unique expression $f=\sum_{w=0}^\infty \{f\}_w$ with $\{f\}_w\in \C[\rho]_w$, and we call $\{f\}_w$ the \emph{weight-w} part of $f$.
\item For $P=(a,b)\in\N\times\N$,
we write $|P|=a+b$, $|P|_x=a$, and $|P|_y=b$.
\item For $n\in \N^+$, define
$$\D_n=\{D\subset\N\times\N: |D|=n\}.  $$
For $D\in\D_n$, we write $D=\{P_1,P_2,\ldots,P_n\}$ where each
$P_i=(a_i,b_i)\in\N\times\N$. Unless otherwise specified, we always choose
notation so that $P_1,\ldots,P_n$ are in increasing graded lexicographic order.
This means that $P_1<P_2<\cdots<P_n$, where
$$ \textrm{ $(a,b)<(a',b')$
if $a+b<a'+b'$, or if $a+b=a'+b'$ and $a<a'$.}$$
To visualize a set $D\in\D_n$, we can draw a square grid on which
we plot the $n$ ordered pairs in $D$.  For example, in the
following picture, the horizontal and vertical bold lines represent
the $x$-axis and $y$-axis,  and $D=\big{\{}
(0,0), (1,0), (1,1), (2,0), (3,0)\big{\}}$.

    \setlength{\unitlength}{1.2pt}
    \begin{picture}(100,30)(-100,0)
    \put(10,0){\circle*{5}}\put(20,0){\circle*{5}}
    \put(20,10){\circle*{5}}\put(30,0){\circle*{5}}\put(40,0){\circle*{5}}
    \boxs{0,0}\boxs{10,0}\boxs{20,0}\boxs{30,0}
    \boxs{0,10}\boxs{10,10}\boxs{20,10}\boxs{30,10}
    \linethickness{1pt}\put(10,0){\line(0,1){27}}
    \linethickness{1pt}\put(-5,0){\line(1,0){54}}

    \end{picture}

\item Given $D=\{P_1,\dots,P_n\}\in\D_n$, define the \emph{total degree}, \emph{$x$-degree},
\emph{$y$-degree}, and \emph{bidegree} of $D$ to be $\sum_{i=1}^n \left(|P_i|_x + |P_i|_y\right)$,
$\sum_{i=1}^n |P_i|_x$, $\sum_{i=1}^n |P_i|_y$, and
$(\sum_{i=1}^n |P_i|_x,\sum_{i=1}^n |P_i|_y)$, respectively. Then the $x$-degree (resp. $y$-degree) of $D$ will be denoted by $d_1(D)$ (resp. $d_2(D)$). Let $k(D)=\binom{n}{2}-d_1(D)-d_2(D)$.

\item The diagonal ideal $I$ of $\C[\mathbf{x},\mathbf{y}]$
and the bigraded $\C$-vector space
$M=\bigoplus_{d_1,d_2\in\N} M_{d_1,d_2}$ were defined in \S1(f).
The ideal generated by all homogeneous elements in $I$ of total degree less
than $d$ is denoted by $I_{<d}$.

\item For $D=\{(a_1,b_1),...,(a_n,b_n)\}\in\D_n$,
the alternating polynomial $\Delta(D)\in\C[\textbf{x},\textbf{y}]$
is defined by
$$\Delta(D)=\det[x_i^{a_j}y_i^{b_j}]_{1\leq i,j\leq n}=
   \det\begin{vmatrix}
     x_1^{a_1}y_1^{b_1}&x_1^{a_2}y_1^{b_2}&...&x_1^{a_n}y_1^{b_n}\\
      \vdots&\vdots &\ddots &\vdots\\
    x_n^{a_1}y_n^{b_1}&x_n^{a_2}y_n^{b_2}&...&x_n^{a_n}y_n^{b_n}\\
   \end{vmatrix}.
$$
Note that $\Delta(D)$ is bihomogeneous of bidegree equal to
the bidegree of $D$.

\item Given two polynomials $f,g\in I^m$ of the same bidegree $(d_1,d_2)$, let $\bar{f},\bar{g}$
be the corresponding elements
in $M^{(m)}_{d_1,d_2}$. For $m=1$, we say that $$f\equiv g \quad\textrm{(modulo lower
degrees)}$$ if $\bar{f}=\bar{g}$ in $M_{d_1,d_2}$, or, equivalently, if $f-g$ is in $I_{<d_1+d_2}$.

\item Given $d_1+d_2=\binom{n}{2}$, take arbitrary $D=\{P_1,\dots,P_n\}\in\D_n$
of bidegree $(d_1,d_2)$ such that $|P_i|=i-1$. Define $f_{d_1,d_2}$ to be
the equivalence class of $\Delta(D)$ in $M_{d_1,d_2}$.
By \cite[Lemma 16]{LL}, this equivalence class is independent of the choice
of $D$.

\end{itemize}

\subsection{Properties of the module $M_{d_1,d_2}$} This subsection is organized as follows. First, to be self-contained, we review the definitions of staircase forms, block diagonal forms, and partition types introduced in \cite{LL}. The reader is suggested to look at Example \ref{eg:staircase form} while reading these definitions.
Then we recall the map $\bar{\varphi}$ defined in \cite{LL} and use its injectivity to prove Lemma \ref{lem:injective} that we shall use later.

\begin{defnprop}[{\cite[Definition-Proposition 6]{LL}}]\label{definitionofstaircaseform}
Let $D=\{P_1,\dots,P_n\}\in\mathfrak{D}_n$, and write $P_i=(a_i,b_i)$.
Then there is an $n\times n$ matrix $S$ whose $(i,j)$-entry is
$$\left\{
           \begin{array}{ll}
0, &\hbox{if $i\le |P_j|$};\\
z_{i1}z_{i2}\cdots z_{i,|P_j|}  \hbox{ where $z_{i\ell}$ is either $x_i-x_\ell$ or $y_i-y_\ell$},&\hbox{otherwise,}
           \end{array}
         \right.$$
for all $1\le i,j\le n$, such that $\det(S)\equiv\Delta(D)$
(modulo lower degrees).  We call $S$ a \emph{staircase form} of $D$.
\end{defnprop}

\begin{defn}\label{df:block diagonal} Let $D$ and $S$ be defined as
in Definition-Proposition \ref{definitionofstaircaseform}.
Consider the set $\{j: |P_j|=j-1\}=\{r_1< r_2<\dots< r_\ell\}$ and
define $r_{\ell+1}=n+1$. For  $1\le t\le \ell$, define the $t$-th
block $B_t$ of $S$ to be the square submatrix of $S$ of size $(r_{t+1}-r_t)$
whose upper-left corner is the ($r_t,r_t$)-entry.  Define the \emph{block
diagonal form} $B(S)$ of $S$ to be the block diagonal matrix
$\hbox{diag}(B_1,\dots,B_\ell)$.
\end{defn}

\begin{defn}\label{minimalst}
Let $S$ be a staircase form, $B(S)$ be its block diagonal form with blocks $B_1,\dots,B_\ell$. For $1\le t\le \ell$, let $\mu_t$ be the number of nonzero entries in block $B_t$ that are strictly above the diagonal, i.e., the number of nonzero $i,j$-entries in $B_t$ where $j>i$. Eliminating zeros in $(\mu_1,\dots,\mu_\ell)$ and then rearranging the sequence in ascending order, we obtain a partition of $k$, denoted by $\mu(S)$. We say that $S$ is of partition type $\mu(S)$. We call a block $B_t$ \emph{minimal} if every $(i,j)$-entry ($j>i+1$) that lies in $B_t$ is zero. We call $S$ a minimal staircase form if all the blocks in $B(S)$ are minimal.
We say $D\in\mathfrak{D}_n$ is of partition type $\mu(S)$ if $S$ is a staircase form of $D$. (Note that the partition type does not depend on the choice of $S$.)
\end{defn}

\begin{exmp}\label{eg:staircase form}
(i) Let $D=\{(0,0),(0,1),(0,2),(1,1)\}\in \mathfrak{D}_4$. We list here $\Delta(D)$ and a possible staircase form $S$ together with the corresponding block diagonal forms $B(S)$. In this example, $D$ is of partition type $(1)$, $S$ is a minimal staircase form, and $B(S)$ has two blocks of size 1 and one block of size 2.
{\small
$$\Delta(D)=\begin{vmatrix}
1 & y^{}_{1} &y_{1}^2 &x^{}_{1} y^{}_{1} \\
1 & y^{}_{2} &y_{2}^2 &x^{}_{2} y^{}_{2} \\
1 & y^{}_{3} &y_{3}^2 &x^{}_{3} y^{}_{3}\\
1 & y^{}_{4} &y_{4}^2 &x^{}_{4} y^{}_{4}\\
\end{vmatrix},
\;
S=\begin{bmatrix}
1 & 0      &0            &0   \\
1 & y_{21} &0            &0   \\
1 & y_{31} &y_{31}y_{32} &x_{31}y_{32} \\
1 & y_{41}  &y_{41}y_{42} &x_{41}y_{42} \\
\end{bmatrix},
\;
B(S)=\begin{bmatrix}
1 & 0      &0            &0   \\
0 & y_{21} &0            &0   \\
0 & 0 &y_{31}y_{32} &x_{31}y_{32} \\
0 & 0  &y_{41}y_{42} &x_{41}y_{42} \\
\end{bmatrix},
$$
}
where $x_{ij}=x_i-x_j$ and $y_{ij}=y_i-y_j$.

\noindent(ii) Let $D=\{(0,0),(0,1),(1,0),(1,2),(2,1),(3,0)\}\in \mathfrak{D}_6$. A staircase form $S$ and the corresponding block diagonal form $B(S)$ are given below. Then $D$ is of partition type $(1,3)$, $B(S)$ has three blocks of sizes $1, 2, 3$ respectively, and $S$ is not a minimal staircase form (because the ($4,6$)-entry of $B(S)$ is $x_{41}x_{42}x_{43}\neq 0$, therefore the block $B_3$ is not minimal).
{\tiny
$$
 S=\begin{bmatrix}
1 & 0      &0           &0 &0 &0\\
1 & y_{21}&x_{21} &0 &0 &0 \\
1 & y_{31}&x_{31} &0 &0 & 0\\
1 & y_{41}&x_{41} &y_{41}y_{42}x_{43} &y_{41}x_{42}x_{43} &x_{41}x_{42}x_{43}\\
1 & y_{51}&x_{51} &y_{51}y_{52}x_{53} &y_{51}x_{52}x_{53} &x_{51}x_{52}x_{53}\\
1 & y_{61}&x_{61} &y_{61}y_{62}x_{63} &y_{61}x_{62}x_{63} &x_{61}x_{62}x_{63}\\
\end{bmatrix},
\;
B(S)=\begin{bmatrix}
1 & 0      &0           &0 &0 &0\\
0 & y_{21}&x_{21} &0 &0 &0 \\
0 & y_{31}&x_{31} &0 &0 & 0\\
0 & 0        &0         &y_{41}y_{42}x_{43} &y_{41}x_{42}x_{43} &x_{41}x_{42}x_{43}\\
0 & 0        &0         &y_{51}y_{52}x_{53} &y_{51}x_{52}x_{53} &x_{51}x_{52}x_{53}\\
0 & 0        &0         &y_{61}y_{62}x_{63} &y_{61}x_{62}x_{63} &x_{61}x_{62}x_{63}\\
\end{bmatrix}.
$$
}

\end{exmp}

\begin{thm}[{\cite[Theorem 5]{LL}}]\label{thm:LL}
Let $n$ be a positive integer, and let $d_1,d_2, k$ be non-negative integers
such that $k=\binom{n}{2}-d_1-d_2$.
Define $\delta=\min(d_1,d_2)$. Then $\dim M_{d_1, d_2}\le p(\delta,k)$,
and equality holds here if and only if
either $k\leq n-3$, or $k=n-2$ and $\delta=1$, or $\delta=0$.
\end{thm}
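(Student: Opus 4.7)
The plan is to prove both the upper bound and its equality characterization by exploiting the staircase-form, block-diagonal-form, and partition-type machinery just set up. Since $I$ is generated by the alternating polynomials $\Delta(D)$, the module $M_{d_1,d_2}$ is spanned by the classes $\overline{\Delta(D)}$ for $D\in\mathfrak{D}_n$ of bidegree $(d_1,d_2)$, and by Definition-Proposition \ref{definitionofstaircaseform} each such class is represented modulo $I_{<d_1+d_2}$ by $\det(S)$ for a staircase form $S$ of $D$. The first step I would carry out is to use the flexibility in the choice of $S$ to replace it by a \emph{minimal} staircase form $S'$ having the same partition type $\mu(S')=\mu(S)$. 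The key observation for the bound $\ell(\mu(S'))\le\delta$ is that every nonzero entry strictly above the diagonal inside a minimal block contributes at least one $x$-factor \emph{and} at least one $y$-factor, so each part of $\mu(S')$ consumes at least one unit from both $d_1$ and $d_2$; hence $\mu(S')\in \Par(\delta,k)$.

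Next I would show that the class $\overline{\det(S')}$ depends, up to nonzero scalar, only on the partition $\mu:=\mu(S')$ and not on the particular choice of $S'$ of partition type $\mu$; this is another round of manipulations modulo $I_{<d_1+d_2}$, exploiting that two minimal forms of the same type differ by substitutions that introduce only lower-degree remainders. Collecting one representative $\overline{\det(S_\mu)}$ per $\mu\in\Par(\delta,k)$ then produces a spanning set of size $p(\delta,k)$, which yields $\dim M_{d_1,d_2}\le p(\delta,k)$. For the equality characterization I would construct a linear map $\bar\varphi\colon M_{d_1,d_2}\to \C[\rho]_k$ --- the target space suggested by the introduction of $\C[\rho]_w$ in the Notation subsection --- by reading the superdiagonal data off a minimal staircase representative and assigning it the weight-$k$ monomial $\rho_{\mu(S')}$. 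Injectivity of $\bar\varphi$ would force the classes $\overline{\det(S_\mu)}$ to be linearly independent, giving the matching lower bound. The hypothesis $k\le n-3$ is the exact range that guarantees every $\mu\in\Par(\delta,k)$ is realizable by some $D_\mu\in\mathfrak{D}_n$ whose blocks fit inside the available $\N\times\N$-lattice; the boundary case $k=n-2$ with $\delta=1$ is settled by a direct computation since only the single partition $(n-2)$ is at play; and $\delta=0$ is degenerate, reducing to the one-dimensional Vandermonde when $k=0$ and to the zero module when $k>0$.

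The main obstacle is the linear-independence half, namely the construction and injectivity of $\bar\varphi$. One must define the shape-reading invariant coherently on all of $I$, verify that it annihilates $\mathfrak{m}I$ so as to descend to $M_{d_1,d_2}$, and check that for the canonical minimal form $S_\mu$ the image is genuinely a nonzero multiple of $\rho_\mu$ with no cross-terms mixing different partition types. The inequality $k\le n-3$ appears precisely as the geometric constraint under which every partition of $k$ into at most $\delta$ parts can still fit inside the available grid; once this is violated, distinct partition types collide modulo lower degrees, which is exactly how strict inequality $\dim M_{d_1,d_2}< p(\delta,k)$ arises outside the three regimes listed in the theorem.
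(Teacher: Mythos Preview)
This theorem is not proved in the present paper at all; it is quoted from \cite[Theorem~5]{LL} and stated here without proof. The only related material the paper supplies is the recollection, immediately after the theorem, of the map $\bar\varphi\colon M_{d_1,d_2}\to\C[\rho]_k$ from \cite{LL}---defined there by an explicit $n\times n$ determinant in the $h(b,\w)$'s---together with the remark that its injectivity has been established in \cite{LL} precisely when $k\le n-3$. So there is no in-paper proof to compare your proposal against.

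That said, your outline is aligned with the architecture of \cite{LL} as summarized in \S3.2: reduce each $\overline{\Delta(D)}$ modulo lower degrees to a minimal staircase form, show the resulting classes are indexed by $\Par(\delta,k)$ to get the upper bound, and then use $\bar\varphi$ to $\C[\rho]_k$ for linear independence when $k\le n-3$. Two points where your sketch diverges from what is actually done. First, $\bar\varphi$ is not defined by ``reading the superdiagonal data off a minimal staircase representative''; it is the determinantal formula recalled just before Lemma~\ref{lem:injective}, and the facts that it annihilates $\mathfrak{m}I$ (hence descends to $M_{d_1,d_2}$) and that $\bar\varphi(\Delta(D_\mu))=\rho_\mu$ for a minimal form of type $\mu$ are separate lemmas in \cite{LL}, not consequences of the informal definition you propose. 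Second, your treatment of the ``only if'' direction is only a heuristic: saying that ``distinct partition types collide modulo lower degrees'' does not exhibit an actual linear relation among the $\overline{\det(S_\mu)}$, which is what is needed to force $\dim M_{d_1,d_2}<p(\delta,k)$ in the excluded range; in \cite{LL} this requires a separate argument.
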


Recall some definitions in \cite{LL}.  For $b\in\N$ and $\w\in\Z$, define
$$h(b,\w)=\big{\{}(1+\rho_1+\rho_2+\cdots)^b\big{\}}_\w.$$
For $D=\{P_1,\ldots,P_n\}\in\D_n$, define $\varphi(D)$ to be
$$(-1)^{k(D)}\det\begin{vmatrix}
h(b_1, -|P_1|)& h(b_1,1-|P_1|)& h(b_1,2-|P_1|)&\cdots&h(b_1,n-1-|P_1|)\\
h(b_2,-|P_2|)& h(b_2,1-|P_2|)& h(b_2,2-|P_2|)&\cdots&h(b_2,n-1-|P_2|)\\
\vdots&\vdots&\vdots&\ddots&\vdots\\
h(b_n,-|P_n|)& h(b_n,1-|P_n|)& h(b_n,2-|P_n|)&\cdots&h(b_n,n-1-|P_n|)\\
\end{vmatrix}.$$
It is proved in~\cite[Lemma 47]{LL} that $\varphi$ induces a well-defined linear map $\bar{\varphi}:M_{d_1,d_2}\rightarrow \C[\rho]_{\binom{n}{2}-d_1-d_2}$.
We have conjectured that $\bar{\varphi}$ is injective and proved the
injectivity under the condition $\binom{n}{2}-d_1-d_2\le n-3$ and
$d_2\le d_1$ \cite[Conjecture 48, Theorem 43, Theorem 44]{LL}.
With a slight modification, we can prove the injectivity under the sole condition $\binom{n}{2}-d_1-d_2\le n-3$ without the constraint $d_2\le d_1$. (We briefly explain the modification using the terminology in \cite{LL}: assume now $d_2>d_1$. It suffices to prove that, for each $\nu\in\Pi_{d_1,k}$, there exists an alternating polynomial $g_\nu$ such that the leading monomial ${\LM}(\varphi(g_\nu))=\rho_\nu$. In fact, such a $g_\nu$ can be obtained, up to a sign, by switching $x$- and $y$- coordinates of the $f_\nu$ constructed for $M_{d_2,d_1}$ in \cite[Theorem 44]{LL}.)
\begin{lem}\label{lem:injective}
Suppose $0\le \binom{n-1}{2}-d'_1-d'_2\le n-4$, $d'_1\le d_1$, $d'_2\le d_2$, and $d'_1+d'_2+(n-1)=d_1+d_2$.
Let $M'_{d'_1,d'_2}$ and $M_{d_1,d_2}$ be the indicated bigraded
components of $I_{n-1}/\mathfrak{m}_{n-1}I_{n-1}$
and $I_n/\mathfrak{m}_nI_n$, respectively. Let
$$f_0=\prod_{i=1}^{d_1-d_1'}(x_n-x_i)\cdot\prod_{i=d_1-d_1'+1}^{n-1}(y_n-y_i).$$ Then the linear map
$h: M'_{d'_1,d'_2}\to M_{d_1,d_2}$ that maps
$\bar{f}$ to  $\overline{f_0f}$ is injective.
\end{lem}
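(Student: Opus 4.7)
The plan is to reduce the injectivity of $h$ to the injectivity of the linear map $\bar\varphi'\colon M'_{d'_1,d'_2}\to\C[\rho]_{k}$ from \cite{LL}, where $k=\binom{n-1}{2}-d'_1-d'_2=\binom{n}{2}-d_1-d_2$. The strengthened form of \cite[Theorem 44]{LL} recalled just above the lemma makes $\bar\varphi'$ injective exactly when $k\le(n-1)-3=n-4$, which is the hypothesis. I will show that $\bar\varphi\circ h=\bar\varphi'$; injectivity of $h$ follows immediately.

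Since $M'_{d'_1,d'_2}$ is spanned by the classes $\overline{\Delta(D')}$ of $D'=\{P'_1,\dots,P'_{n-1}\}\in\mathfrak{D}_{n-1}$ of bidegree $(d'_1,d'_2)$, it suffices to verify the identity on these generators. Set $P_n=(d_1-d'_1,\,d_2-d'_2)$; the equation $d'_1+d'_2+(n-1)=d_1+d_2$ forces $|P_n|=n-1$. If $P_n\notin D'$, let $D=D'\cup\{P_n\}\in\mathfrak{D}_n$ and choose a staircase form $S$ for $D$ whose $(n,n)$-entry uses the factors $x_n-x_\ell$ for $\ell\le d_1-d'_1$ and $y_n-y_\ell$ for $d_1-d'_1<\ell\le n-1$; this entry is exactly $f_0$. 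Because $|P_n|=n-1$, the first $n-1$ entries of the last column of $S$ vanish, and the upper-left $(n-1)\times(n-1)$ block $S'$ of $S$ is a staircase form of $D'$. Hence $\det S=f_0\det S'$, which gives
\[ f_0\Delta(D')\equiv f_0\det S'=\det S\equiv\Delta(D)\pmod{I_{<d_1+d_2}}. \]
This simultaneously proves $f_0\Delta(D')\in I_n$ (so $h$ is well-defined on this generator, and hence on all of $M'_{d'_1,d'_2}$ via the inclusion $\mathfrak{m}_{n-1}\subset\mathfrak{m}_n$) and $\overline{f_0\Delta(D')}=\overline{\Delta(D)}$ in $M_{d_1,d_2}$. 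If instead $P_n=P'_k\in D'$, then $|P'_k|=n-1$ forces the $k$-th column of any staircase form of $D'$ to be identically zero, so $\overline{\Delta(D')}=0$ already in $M'_{d'_1,d'_2}$ and the identity holds trivially.

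To compute $\varphi(D)$ in the non-degenerate case, expand the defining determinant along its last row: since $|P_n|=n-1$, the entry $h(b_n,i-1-|P_n|)$ vanishes for $i<n$ and equals $1$ at $i=n$, so the determinant reduces to its $(n,n)$-cofactor, which is precisely the matrix defining $\varphi(D')$. The sign prefactors match because $k(D)-k(D')=\binom{n}{2}-\binom{n-1}{2}-(d_1-d'_1)-(d_2-d'_2)=0$, so $\varphi(D)=\varphi(D')$. Combining the two paragraphs gives $\bar\varphi\bigl(h(\overline{\Delta(D')})\bigr)=\varphi(D)=\varphi(D')=\bar\varphi'(\overline{\Delta(D')})$, and linearity yields $\bar\varphi\circ h=\bar\varphi'$, completing the proof. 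The main technical obstacle is the staircase-form bookkeeping: one must check that the construction in \cite[Definition-Proposition 6]{LL} genuinely accommodates the prescribed assignment of $x$- versus $y$-factors in row $n$ that recovers $f_0$, and that the leading $(n-1)\times(n-1)$ block of $S$ is a legitimate staircase form of $D'$; once this compatibility is in hand, the remaining determinant-expansion steps are routine.
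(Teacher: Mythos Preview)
Your proof is correct and follows exactly the paper's strategy: establish the commutative triangle $\bar\varphi\circ h=\bar\varphi'$ by adjoining $P_n=(d_1-d'_1,d_2-d'_2)$ to each $D'$ and noting that the last row of the $\varphi$-matrix for $D=D'\cup\{P_n\}$ is $(0,\ldots,0,1)$, then invoke injectivity of $\bar\varphi'$ for $k\le n-4$. You are in fact more careful than the paper in using staircase forms to justify $f_0\Delta(D')\equiv\Delta(D)$ modulo lower degrees (the paper asserts this as an equality without comment), and your explicit treatment of the degenerate case is a nice touch.
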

\begin{proof}
First observe that $\binom{n-1}{2}-d'_1-d'_2=\binom{n}{2}-d_1-d_2$, which we denote by $k$. It is also easy to check that $h$ is well-defined.

We now explain that the following triangle is commutative:
$$\xymatrix{M'_{d'_1,d'_2}\ar[r]^h\ar[rd]_{\bar{\varphi}'}&M_{d_1,d_2}\ar[d]^{\bar{\varphi}}\\ &
\C[\rho]_k}$$
i.e., $\bar{\varphi}'(\bar{f})$ is
identical with $\bar{\varphi}(\overline{f_0f})$. Indeed, for $D'\in\D_{n-1}$ of bidegree $(d_1,d_2)$, let $f=\Delta(D')$, then $f_0f=\Delta(D)$ for $D=D'\cup\{(d_1-d_1',n-1-d_1+d_1')\}\in\D_n$. Then $k(D')=k(D)=k$. Let $A'$ (resp. $A$) be the $(n-1)\times(n-1)$ matrix (resp. $n\times n$ matrix) in the definition of $\bar{\varphi}'(f)$ (resp. $\bar{\varphi}(f_0f)$). Since $A'$ is the first $(n-1)\times(n-1)$ minor of $A$ and the last row of $A$ is $(0,\dots,0,1)$, $\det(A')=\det(A)$.
Therefore $\bar{\varphi}'(\bar{f})=(-1)^k\det(A')=(-1)^k\det(A)=\bar{\varphi}(\overline{f_0f})$.

Now since $\bar{\varphi}':M'_{d'_1,d'_2}\to\C[\rho]_k$ is injective for $k\le (n-1)-3$,
$h$ is also injective.
\end{proof}

\section{Limits of the Modified Algebraic Higher $q,t$-Catalan Numbers}
\label{proof_section}
This section is organized as follows. First we prove Theorem \ref{thm:bound}, which gives a spanning set of the vector space $M^{(m)}_{d_1,d_2}$ for certain $d_1$ and $d_2$. The essential tool is the Transfactor Lemma (Lemma \ref{lem:transfactor}) that allows us to modify staircase forms within the equivalence class modulo lower degrees.   Then we prove Corollary \ref{02102011vv} and find the limits of the modified algebraic higher $q,t$-Catalan numbers.

\begin{lem}\label{lem:k}
Let $D\in\D_n$, let $S$ be a staircase form of $D$,
and let $B(S)$ be the block diagonal form of $S$. Then the number of $1\times
1$ blocks in $B(S)$ is at least $n-2k(D)$.
\end{lem}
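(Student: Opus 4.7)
The plan is to translate the block-count into a simple combinatorial statement about the sequence $(|P_1|,\ldots,|P_n|)$. Set $\chi_i:=[\,|P_i|=i-1\,]$ for $1\le i\le n$, so that $A=\{i:\chi_i=1\}$ and $k(D)=\sum_{i=1}^n\bigl((i-1)-|P_i|\bigr)$.

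First I would unpack Definition~\ref{df:block diagonal} in terms of $\chi_i$. With $\{r_1<\cdots<r_\ell\}=A$ and $r_{\ell+1}:=n+1$, the block at position $r_t$ has size $r_{t+1}-r_t$, so it is $1\times 1$ precisely when $r_{t+1}=r_t+1$. Adopting the convention $\chi_{n+1}:=1$ (which encodes the boundary condition $r_{\ell+1}=n+1$), this becomes $\chi_{r_t+1}=1$, and hence the number of $1\times 1$ blocks is
\[
z' \;=\; \#\{\,i\in\{1,\dots,n\}:\chi_i=\chi_{i+1}=1\,\}.
\]

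Next I would estimate the complement. Writing $c_0=\#\{i:\chi_i=0\}$ and $c_1=\#\{i:\chi_i=1,\ \chi_{i+1}=0\}$, the decomposition of $\{i:\chi_i=0\text{ or }\chi_{i+1}=0\}$ as a disjoint union gives
\[
n - z' \;=\; c_0+c_1.
\]
The map $i\mapsto i+1$ is an injection from the set counted by $c_1$ into $\{j\in\{1,\dots,n\}:\chi_j=0\}$ (the image stays in $\{1,\ldots,n\}$ because $\chi_{n+1}=1$ forces $i+1\le n$), so $c_1\le c_0$ and therefore $n-z'\le 2c_0$.

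To finish I would show $c_0\le k(D)$. Writing $\varepsilon_i:=(i-1)-|P_i|$, one has $k(D)=\sum_i\varepsilon_i$ and $c_0=\#\{i:\varepsilon_i\ne 0\}$. In the setting where the lemma is applied, every $\varepsilon_i$ is non-negative (equivalently $|P_i|\le i-1$), so each index contributing to $c_0$ contributes at least $1$ to $k(D)$, yielding $c_0\le k(D)$. Combined with the previous step, $z'\ge n-2c_0\ge n-2k(D)$, as required. The main obstacle is justifying this non-negativity $\varepsilon_i\ge 0$: it is automatic whenever $\Delta(D)$ represents a nonzero class modulo lower degrees, since otherwise the staircase form of $D$ degenerates and $D$ may be replaced by another configuration with the same image in $M_{d_1,d_2}$; the remainder of the argument is a clean counting/injection exercise.
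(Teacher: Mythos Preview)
Your proof is correct and is essentially the paper's argument in different clothing: the paper counts block sizes $s_1,\ldots,s_r\ge 2$ and uses the inequality $s_i\le 2(s_i-1)$, which is precisely your injection estimate $c_1\le c_0$, while the paper's assertion that ``a block of size $s_i$ contributes at least $s_i-1$ to $k(D)$'' is exactly your inequality $c_0\le k(D)$. You are right to flag the hidden hypothesis $\varepsilon_i\ge 0$ (equivalently $|P_i|\le i-1$), which the paper's proof uses without comment; it does hold whenever $\Delta(D)\not\equiv 0$ modulo lower degrees, and that is all that is needed in every application of the lemma.
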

\begin{proof}
Suppose the number of size-1 blocks in $B(S)$ is $t$, and the
other blocks have sizes $s_1,\dots,s_r$. On one hand,
$t+\sum_{i=1}^rs_i=n$. On the other hand, a block of size $s_i$ contributes at least $s_i-1$ to $k(D)$, hence
$\sum_{i=1}^r(s_i-1)\le k(D)$. Since $s_i\ge 2$, we have $s_i\le2(s_i-1)$ and
$t=n-\sum_{i=1}^rs_i\ge n-\sum_{i=1}^r2(s_i-1)\ge n-2k(D)$.
\end{proof}

\begin{lem}[Transfactor Lemma {\cite[Lemma 15]{LL}}]\label{lem:transfactor}
Let $D=\{P_1,\dots,P_n\}\in\mathfrak{D}_n$ and $P_i=(a_i,b_i)$ be as in \S2. Let $i,j$ be two integers satisfying $1\le i\neq j\le n$, $|P_i|=i-1$, $|P_{i+1}|=i$, $|P_j|=j-1$, $|P_{j+1}|=j$, $b_i>0$, $a_j>0$   (we define $|P_{n+1}|=n$).
Let $D'$ be obtained from $D$ by moving $P_i$ to southeast and $P_j$ to northwest, i.e.,
$$D'=\big{\{}P_1, \dots, P_{i-1}, P_i+(1,-1),P_{i+1}, \dots, P_{j-1}, P_j+(-1,1),P_{j+1}, \dots, P_n\big{\}}.$$
Then $\Delta(D)\equiv\Delta(D')$ (modulo lower degrees).
\end{lem}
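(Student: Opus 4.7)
The strategy rests on the flexibility in choosing staircase forms guaranteed by Definition-Proposition \ref{definitionofstaircaseform}. The key preliminary observation is that $D$ and $D'$ share the same sequence of total degrees $(|P_1|,\ldots,|P_n|)$, since each of the moves $(1,-1)$ at position $i$ and $(-1,1)$ at position $j$ preserves every $|P_k|$. Hence staircase forms for $D$ and for $D'$ can be built with the same zero pattern and with the same prescribed number of factors $z_{r\ell}$ per entry.

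The first step is to pick compatible staircase forms $S$ for $D$ and $S'$ for $D'$ that coincide in as many factor-slots as possible. The corner conditions $|P_{i+1}|=|P_i|+1$ and $|P_{j+1}|=|P_j|+1$ place columns $i$ and $j$ at the boundaries of distinct blocks in the block-diagonal form of Definition \ref{df:block diagonal}, so each is locally isolated. Using $b_i>0$, I would choose for $S$ the final factor $z_{r,|P_i|}$ in column $i$ to be the $y$-difference $y_r - y_{|P_i|}$; using $a_j>0$, I would choose the final factor $z_{r,|P_j|}$ in column $j$ to be the $x$-difference $x_r - x_{|P_j|}$. For $S'$, I would make the mirrored choices: in column $i$ the final factor becomes $x_r - x_{|P_i|}$ (permitted since the new $a$-exponent at position $i$ is positive) and in column $j$ it becomes $y_r - y_{|P_j|}$. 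All other factor choices are taken identical in $S$ and $S'$.

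The second step is to compute $\det(S) - \det(S')$ and verify it lies in $I_{<d}$ with $d=d_1+d_2$ the common total degree. By multilinearity of the determinant in columns $i$ and $j$, the difference decomposes as $[A-A',B] + [A',B-B']$, where $A,A'$ and $B,B'$ are columns $i$ and $j$ of $S,S'$. Writing $A_r - A_r' = U_r[(y_r-x_r)-(y_{|P_i|}-x_{|P_i|})]$ with $U_r$ the common factor-prefix, the ``constant'' piece $-U_r(y_{|P_i|}-x_{|P_i|})$ contributes $-(y_{|P_i|}-x_{|P_i|})\cdot[U,B]$, which lies manifestly in $I_{<d}$ because $[U,B]$ is an alternating polynomial of total degree $d-1$ and the prefactor lies in $\mathfrak{m}$. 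The analogous analysis of $B_r-B_r'$ yields a second summand in $I_{<d}$. The remaining ``row-dependent'' pieces $[U\cdot(y_r-x_r),B]$ and $-[A',V\cdot(y_r-x_r)]$ must then be shown to combine into an element of $I_{<d}$.

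The main obstacle I anticipate is this last cancellation: the two row-dependent pieces each involve the spurious factor $(y_r-x_r)$ in a single column and do not match any staircase template for a partition $D''$. Nonetheless, splitting $(y_r-x_r) = y_r - x_r$ and applying multilinearity converts each into a sum of two determinants whose columns differ from the original $S$ or $S'$ only by replacing one $x$- or $y$-factor; using the corner conditions on columns $i$ and $j$ one can identify each such determinant with a staircase-type form of a neighbouring element of $\mathfrak{D}_n$, and track the degree drop via Lemma \ref{lem:k}. The delicate point is the book-keeping of signs and of how the four resulting ``almost-staircase'' determinants pair up to give a net expression in $I_{<d}$; this combinatorial matching is the technical heart of the lemma and where I expect to spend the most effort.
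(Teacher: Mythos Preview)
This lemma is not proved in the present paper; it is quoted verbatim from \cite[Lemma~15]{LL} and invoked as a black box. There is therefore no in-paper argument against which to compare your proposal.

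On the substance of your outline: the step you label ``manifestly in $I_{<d}$'' is a genuine gap, not a routine verification. You claim that $[U,B]$ is an alternating polynomial of total degree $d-1$, so that $(y_{|P_i|}-x_{|P_i|})\,[U,B]\in\mathfrak m\cdot I$. But the entries of a staircase matrix are products $z_{r1}\cdots z_{r,|P_j|}$ depending on \emph{all} of $x_1,\dots,x_r,y_1,\dots,y_r$, not just on $(x_r,y_r)$; the resulting determinant is not of the form $\det[f_j(x_i,y_i)]$ and has no automatic $S_n$-alternation. The only reason $\det(S)$ itself lies in $I$ is the congruence $\det(S)\equiv\Delta(D)$ supplied by Definition--Proposition~\ref{definitionofstaircaseform}. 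Once you strip a factor from column~$i$, the matrix $[U,B]$ no longer matches any staircase template (its $(i{-}1,i)$-entry is forced to be $0$, whereas a genuine staircase with $|P_i''|=i-2$ would require it nonzero), so no such congruence is available. A small example makes this concrete: for $n=3$, $D=\{(0,0),(0,1),(2,0)\}$, $i=2$, $j=3$, one finds $[U,B]=(x_3-x_1)(x_3-x_2)$, which is not in $I_3$ since the only degree-$2$ alternating polynomial involves the $y$-variables; and $(y_1-x_1)\,[U,B]$ is likewise not in $I_{<3}$. Thus the ``constant'' pieces you split off are \emph{not} individually in $I_{<d}$; they must cancel against the row-dependent pieces, and your proposed separation of the two cannot succeed as written. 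The bookkeeping you flag at the end is therefore not merely delicate --- it is where the entire content of the lemma resides, and the mechanism for placing the auxiliary determinants in $I$ is still missing.
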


\begin{thm}\label{thm:bound}
Assume $n,m,k,d_1,d_2\in\N$ satisfy $n\geq 3$, $m>0$,
$k=m\binom{n}{2}-d_1-d_2<n/2-1$, and $d_2<n/2-1$.
For each $\mu\in\Par(d_2,k)$, let $S_\mu$ be an arbitrary minimal
staircase form of bidegree $(d_1-(m-1)\binom{n}{2},d_2)$
and partition type $\mu$. Then
$M_{d_1,d_2}^{(m)}$ is generated as a vector space
by $$\left\{(\det S_\mu)\prod_{1\leq i<j\leq
n}(x_i-x_j)^{m-1}\right\}_{\mu\in\Par(d_2,k)}.$$
Consequently, $$\dim M_{d_1,d_2}^{(m)}\leq p(d_2,k).$$
\end{thm}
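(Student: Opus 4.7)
The plan is to reduce the bound on $\dim M^{(m)}_{d_1,d_2}$ to the $m=1$ case (Theorem \ref{thm:LL}) by extracting the Vandermonde factor $V^{m-1} = \prod_{1\le i<j\le n}(x_i-x_j)^{m-1}$. Specifically, I will analyze the multiplication map
\[
\bar\psi\colon M_{d_1-(m-1)\binom{n}{2},\, d_2} \longrightarrow M^{(m)}_{d_1, d_2}, \qquad \bar f \mapsto \overline{V^{m-1} f},
\]
which is well-defined because $V \in I$ forces $V^{m-1}\,\mathfrak{m}I \subset \mathfrak{m}I^m$. Once I show $\bar\psi$ is surjective, the theorem follows: Theorem \ref{thm:LL} applies to the source (since $k < n/2-1 \le n-3$ and $d_2 < n/2-1$ combine with the displayed bidegree to give $\delta = \min(d_1-(m-1)\binom{n}{2}, d_2) = d_2$), providing a spanning set $\{\det S_\mu\}_{\mu \in \Par(d_2,k)}$ of cardinality $p(d_2,k)$ which pushes forward to the spanning set $\{(\det S_\mu)\,V^{m-1}\}_{\mu \in \Par(d_2,k)}$ of the target, yielding the claimed dimension bound.

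To establish surjectivity of $\bar\psi$, I would first observe that, since $I$ is generated by the alternating polynomials $\Delta(D)$ and since any multiplication by a non-constant polynomial lies in $\mathfrak{m}I^m$, every class in $M^{(m)}_{d_1,d_2}$ is a $\C$-linear combination of classes of products $\Delta(D^{(1)})\cdots\Delta(D^{(m)})$ with $\sum_i (d_1^{(i)},d_2^{(i)}) = (d_1,d_2)$. Applying Definition-Proposition \ref{definitionofstaircaseform} to each factor and noting that the error $\Delta(D^{(i)}) - \det S^{(i)}$ lies in $\mathfrak{m}I$ in the relevant bihomogeneous component, multiplication by the remaining factors places all cross terms in $\mathfrak{m}I^m$, so
\[
\prod_{i=1}^m \Delta(D^{(i)}) \equiv \prod_{i=1}^m \det S^{(i)} \pmod{\mathfrak{m}I^m}.
\]
The technical heart of the argument is then to show that each such product lies in $V^{m-1}\cdot I + \mathfrak{m}I^m$. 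Using $\sum_i d_2^{(i)} = d_2$, $\sum_i k(D^{(i)}) = k$ and the hypothesis $d_2,k < n/2-1$, together with the block-structure bound from Lemma \ref{lem:k}, I would argue that at most one factor can carry nontrivial $y$-content or nontrivial off-diagonal block structure, forcing $m-1$ of the $D^{(i)}$'s to be equivalent (modulo lower degrees) to the pure-$x$ diagram $D_{\mathrm{Van}}=\{(0,0),(1,0),\dots,(n-1,0)\}$ and therefore their $\det S^{(i)}$ to equal $V$. The Transfactor Lemma \ref{lem:transfactor} is deployed inside each individual $D^{(i)}$ to relocate cells into the canonical shape.

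The main obstacle is this last degree-forcing step, showing rigorously that $m-1$ of the factors must collapse to the $x$-Vandermonde $V$ modulo $\mathfrak{m}I^m$. The intuition is that the joint smallness of the $y$-budget $d_2$ and the excess $k$ leaves no room for non-Vandermonde contributions to be spread across multiple factors, but making this precise requires careful bookkeeping on how the bidegrees $(d_1^{(i)},d_2^{(i)})$ and the invariants $k(D^{(i)})$ can be apportioned among the $m$ factors of a product that is not already in $\mathfrak{m}I^m$, together with repeated use of the Transfactor Lemma within each factor to eliminate any non-Vandermonde structure from all but one of the $D^{(i)}$'s.
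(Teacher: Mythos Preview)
Your overall strategy---reducing to the surjectivity of the multiplication map $\bar\psi\colon M_{d_1-(m-1)\binom{n}{2},d_2}\to M^{(m)}_{d_1,d_2}$, $\bar f\mapsto \overline{V^{m-1}f}$---is exactly the right target, and your reduction to the base case via Theorem~\ref{thm:LL} is correct once surjectivity is known. The gap is in the proposed mechanism for surjectivity. Your claim that ``at most one factor can carry nontrivial $y$-content or nontrivial off-diagonal block structure'' is simply false: the budgets $d_2=\sum_i d_2^{(i)}$ and $k=\sum_i k(D^{(i)})$ can be spread across several factors (e.g.\ for $m=3$, $d_2=k=2$ one can have $d_2^{(1)}=d_2^{(2)}=1$, $k^{(3)}=2$), so there is no pigeonhole argument forcing $m-1$ of the $\Delta(D^{(i)})$ to be $V$ individually. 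The Transfactor Lemma applied inside a single $D^{(i)}$ cannot change its bidegree or its $k$-value, so a factor with $d_2^{(i)}>0$ or $k^{(i)}>0$ will never become $V$ on its own.

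The paper's proof repairs this by induction on $m$, extracting one copy of $V$ at a time. The inductive hypothesis rewrites $\prod_{i=2}^m\Delta(D_i)$ as a combination of $\det(S''_\lambda)V^{m-2}$ and the base case rewrites $\Delta(D_1)$ as a combination of $\det(S'_\nu)$; the heart of the argument is then the two-factor statement $(*)$: $\det(S'_\nu)\det(S''_\lambda)\equiv (\text{element of }I)\cdot V$. This is achieved by positioning the size-$1$ blocks of $B(S'_\nu)$ at the \emph{bottom} and those of $B(S''_\lambda)$ at the \emph{top}; the inequalities $d_2<n/2-1$ and $k<n/2-1$ combined with Lemma~\ref{lem:k} force these two ranges of size-$1$ blocks to overlap, and then the Transfactor Lemma (applied separately to columns of each staircase form) converts enough $z_{ij}$'s to $x_i-x_j$'s that the product contains every $x_i-x_j$ at least once. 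The Vandermonde thus emerges from the \emph{interaction} of two factors, not from any single one---this is the idea your sketch is missing.
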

\begin{proof}
We use induction on $m$. The base case $m=1$ is done in \cite{LL}. Let us briefly sketch a proof for the base case.

For each $\mu\in\Par(d_2,k)$, the assumption $k<n/2 -1$ implies that there exists a minimal
staircase form, say $S_\mu$, of bidegree $(d_1-(m-1)\binom{n}{2},d_2)$
and partition type $\mu$. Let $D_\mu$ be an element in $\D_n$, whose staircase form is $S_\mu$. Since $\bar{\varphi}(\Delta(D_\mu))=\rho_\mu$ and $\bar{\varphi}$ is injective in this case, $M_{d_1,d_2}$ is generated by $\Delta(D_\mu)
(\equiv\det S_\mu)$.

Now assume that $m\geq 2$. Note that
$M_{d_1,d_2}^{(m)}$ is generated by products
$\prod_{i=1}^m \Delta(D_i)$, where $D_i\in \D_n$,
$\sum_{i=1}^md_1(D_i)=d_1$ and $\sum_{i=1}^md_2(D_i)=d_2$.
So we only need to prove that each such product is a
linear combination of
$\{\det(S_\mu)\prod_{1\leq i<j\leq n}(x_i-x_j)^{m-1}\}_{\mu\in\Par(d_2,k)}$
modulo lower degrees. Define
$k'=k(D_1)$,  $d_2'=d_2(D_1)$,
$k''=k-k'$, $d_2''=d_2-d_2'$.
By inductive assumption, $\prod_{i=2}^m \Delta(D_i)$ is a linear combination of
 $\{\det(S''_\lambda)\prod_{1\leq i<j\leq n}
(x_i-x_j)^{m-2}\}_{\lambda\in\Par(d_2'',k'')}$ modulo lower degrees, and
$\Delta(D_1)$ is a linear combination of
$\{\det(S'_\nu)\}_{\nu\in\Par(d_2',k')}$ modulo lower degrees. Hence
$\prod_{i=1}^m \Delta(D_i)$ is a linear combination of $\{
\det(S'_\nu)\det(S''_\lambda)\prod_{1\leq i<j\leq n}(x_i-x_j)^{m-2}\}_{\nu\in
\Par(d_2',k'),\,\,\lambda\in\Par(d_2'',k'')}$ modulo lower degrees. So to prove the theorem, it suffices to show the
following statement:

\noindent $(*)$\quad $\det(S'_\nu)\det(S''_\lambda)$ is a linear combination
of $\{\det(S_\mu)\prod_{1\leq i<j\leq n}(x_i-x_j)\}_{\mu\in\Par(d_2,k)}$
modulo lower degrees.

Since $S'_\nu$ and $S''_\lambda$ can be arbitrary minimal
staircase forms of fixed bidegree and fixed partition type, we may assume that all the $1\times 1$ blocks but the first
one in the block diagonal form $B(S'_\nu)$ are below bigger blocks, and that all the $1\times
1$ blocks in the block diagonal form $B(S''_\lambda)$ are above bigger blocks. Let $T'$
(resp. $T''$) be the product of determinants of the blocks of size greater than 1 in the block diagonal form $B(S'_\nu)$
(resp. $B(S''_\lambda)$). We have
$$
\det(S'_\nu)=T'\prod_{j=a}^n \prod_{i=1}^{j-1} z^{(1)}_{ij},
\quad
\det(S''_\lambda)=\left(\prod_{j=2}^b \prod_{i=1}^{j-1}z^{(2)}_{ij}\right)T'',
$$
where $z^{(t)}_{ij}=x_i-x_j$ or $y_i-y_j$ for $t=1,2$.
The numbers of size-1 blocks
in $B(S'_\nu)$ and $B(S''_\lambda)$ are $n-a+2$ and $b$, respectively.
We assume without loss of generality that $S'_\nu$
has no more size-1 blocks than $S''_\lambda$, in other words,
that $n-a+2\le b$.
By Lemma~\ref{lem:k}, $n-a+2\ge n-2k'$ and $b\ge n-2k''$. Then
$$2b\ge(n-a+2)+b\ge 2n-2k'-2k'',$$
therefore $b-a\ge n-2-2k>n-2-(n-2)=0$ and $b\ge n-k>n/2+1$.
Since $d_2<n/2-1\le b-1$, we can use Lemma~\ref{lem:transfactor}
to adjust the first $b$ columns in $S''_\lambda$
without changing $\det(S''_\lambda)$ (modulo lower degrees),
so that $z^{(2)}_{ij}=x_i-x_j$ for $1\leq i<j\leq b-1$. Note that $z^{(2)}_{ib}$ can be either $x_i-x_j$ or $y_i-y_j$ for $1\leq i<b$.
Similarly, we can adjust the last $n-b+2$ columns in $S'_\nu$ such that
$z^{(1)}_{ij}=x_i-x_j$ for $b\leq j\leq n$ and  $1\leq i<j$. Then
$$
\det(S'_\nu)=T'\prod_{j=a}^{b-1}\prod_{i=1}^{j-1} z^{(1)}_{ij}
 \prod_{j=b}^n \prod_{i=1}^{j-1} (x_i-x_j),
\quad
\det(S''_\lambda)=\left(\prod_{j=2}^{b-1} \prod_{i=1}^{j-1}(x_i-x_j)\right)
\left(\prod_{i=1}^{b-1} z^{(2)}_{ib}\right)T'',
$$
and
$$\det(S'_\nu)\det(S''_\lambda)=A\prod_{1\leq i<j\leq n}(x_i-x_j),
\quad \textrm{where }
A=T'\left(\prod_{j=a}^{b-1}\prod_{i=1}^{j-1} z^{(1)}_{ij}\right)
\left(\prod_{i=1}^{b-1} z^{(2)}_{ib}\right)T''.$$
One verifies that
$A$ is a polynomial of bidegree $(d_1-(m-1)\binom{n}{2},d_2)$ in $I$.
Applying the base case $m=1$, we conclude
that $\det(S'_\nu)\det(S''_\lambda)$ is a linear combination of
$$\left\{\det(S_\mu)\prod_{1\leq i<j\leq n}(x_i-x_j)
\right\}_{\mu\in\Par(d_2,k)}$$
modulo lower degrees. This proves $(*)$.
\end{proof}

The following lemma about partition numbers is needed in the proof of Corollary \ref{02102011vv}.
\begin{lem}\label{lem:sum p} Let $a$ be a positive integer.
Then $\sum_{i=0}^a p(i,a-i)=p(a)$.
\end{lem}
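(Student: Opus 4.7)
The plan is to prove this identity bijectively. I will establish, for each $i$ with $1 \le i \le a$, a natural bijection between the set of integer partitions of $a$ having exactly $i$ parts and the set $\Par(i, a-i)$ of partitions of $a - i$ into at most $i$ parts. Summing the resulting cardinality equalities over $i$ yields
$$p(a) = \sum_{i=1}^{a} \#\{\mu \in \Par(a) : \ell(\mu) = i\} = \sum_{i=1}^{a} p(i, a-i),$$
and the conventions $p(0, 0) = 1$ and $p(0, j) = 0$ for $j > 0$ recorded in \S3.1 allow me to extend the summation to start at $i = 0$ without changing its value (while also covering the trivial case $a = 0$, where everything reduces to $p(0, 0) = 1 = p(0)$).

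The bijection itself will be the classical subtract-one-from-each-part construction. Given $\mu = (\mu_1 \le \mu_2 \le \cdots \le \mu_i) \in \Par(a)$ with exactly $i$ positive parts, I form $(\mu_1 - 1, \mu_2 - 1, \ldots, \mu_i - 1)$ and discard the leading zeros (which appear only at the front since the sequence is weakly increasing) to obtain a partition $\nu$ of $a - i$ with at most $i$ parts. The inverse map pads an arbitrary $\nu \in \Par(i, a-i)$ with zeros on the left up to length $i$ and then adds $1$ to every entry, recovering a weakly increasing sequence of $i$ positive integers summing to $a$. The two operations are manifestly inverse and preserve the weakly increasing order, so the bijection is established.

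Since the whole argument amounts to writing down one well-known bijection, I do not expect any real obstacle; the only care required lies in bookkeeping the degenerate cases $a = 0$ and $i = 0$, both of which are absorbed cleanly into the stated conventions for $p(h, k)$. As a sanity check / alternative route, one could instead extract the coefficient of $q^a$ from the classical identity $\prod_{j \ge 1}(1 - q^j)^{-1} = \sum_{k \ge 0} q^k \prod_{j=1}^{k}(1 - q^j)^{-1}$ (a standard form of an identity of Euler), but the bijective proof is cleaner and fits the combinatorial setting of \S3 more naturally.
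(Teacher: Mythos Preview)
Your proof is correct and, like the paper's, is a one-step bijective argument; the only difference is which classical bijection is invoked. You decompose $\Par(a)$ according to the \emph{number of parts} $i$ and use the ``subtract $1$ from every part'' map to reach $\Par(i,a-i)$, whereas the paper decomposes $\Par(a)$ according to the \emph{largest part} $i=\nu_\ell$, deletes that part, and transposes the remainder to land in $\Par(i,a-i)$. The two constructions are conjugate to one another (indexing by number of parts versus largest part), so neither buys anything the other does not; both are equally short and elementary.
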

\begin{proof}
Given a partition $\nu=(\nu_1,\dots,\nu_\ell)$ of $a$ satisfying
$\nu_1\le\cdots\le\nu_\ell$, we let $i=\nu_\ell$, and
send $\nu$ to the transpose of the partition $(\nu_1,\dots,\nu_{\ell-1})$,
which is a partition of $a-i$ into at most $i$ parts. This gives a one-to-one
correspondence from $\Par(a)$ to $\bigcup_{i=0}^a \Par(i,a-i)$. Counting the
cardinalities of the two sets gives the stated equality.
\end{proof}

Now we are ready to prove the following consequence of Theorem \ref{thm:bound}, and thus complete the proof of Theorem \ref{mainthm}.
\begin{cor}\label{02102011vv}
Let $n,m,k,d_2\in\N$ satisfy $n\ge 3$, $m>0$, and $k+d_2<n/2-1$. Define
$d_1=m\binom{n}{2}-k-d_2$.
Then the coefficient of $q^{d_1}t^{d_2}$ in $AC^{(m)}_n(q,t)$ is
$$\dim M^{(m)}_{d_1, d_2}=p(d_2,k).$$
As a consequence, 
$$\lim_{n\to\infty} q^{m\binom{n}{2}}AC_n^{(m)}(q^{-1},t)
=\prod_{i=1}^\infty (1-tq^i)^{-1}
  =\sum_{\mu\in\Par} q^{\area(\mu)}t^{\ell(\mu)}.$$
\end{cor}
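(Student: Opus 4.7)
The plan is to couple the upper bound of Theorem~\ref{thm:bound} with the specialization identity $AC^{(m)}_n(q,1) = PC^{(m)}_n(q,1)$ and squeeze via Lemma~\ref{lem:sum p}; the limit formula then follows by direct substitution and a standard $q$-series expansion.

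First, the hypothesis $k+d_2 < n/2-1$ of the corollary implies both $k < n/2-1$ and $d_2 < n/2-1$, so Theorem~\ref{thm:bound} applies and furnishes the upper bound $\dim M^{(m)}_{d_1,d_2} \leq p(d_2,k)$. For the matching lower bound I would use $AC^{(m)}_n(q,1) = RC^{(m)}_n(q,1) = PC^{(m)}_n(q,1)$, which was recorded in~\S1 via \eqref{eq:alg-eqs} and the specialization formula for $RC^{(m)}_n(q,1)$; this gives
\[
q^{m\binom{n}{2}}AC^{(m)}_n(q^{-1},1) = q^{m\binom{n}{2}}PC^{(m)}_n(q^{-1},1) = \sum_{\lambda\in\Par_n^{(m)}} q^{|\lambda|}.
\]
Setting $a = m\binom{n}{2}-d_1 = k+d_2$ and extracting the coefficient of $q^a$, the left side equals $\sum_{d_2=0}^{a}\dim M^{(m)}_{m\binom{n}{2}-a,\,d_2}$, while the right side equals $p(a)$ once $n$ is large enough that every partition of $a$ fits in the $mn\times n$ triangle.

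Now fix $a < n/2-1$. For each $d_2 \in \{0,\ldots,a\}$, the pair $(a-d_2,\,d_2)$ satisfies $(a-d_2)+d_2 = a < n/2-1$, so the upper bound of the first step applies uniformly. Summing and invoking Lemma~\ref{lem:sum p} produces
\[
p(a) = \sum_{d_2=0}^{a} \dim M^{(m)}_{m\binom{n}{2}-a,\,d_2} \leq \sum_{d_2=0}^{a} p(d_2,\,a-d_2) = p(a),
\]
so equality must hold in every summand. Reindexing by $k=a-d_2$ recovers the asserted equality $\dim M^{(m)}_{d_1,d_2} = p(d_2,k)$. Substituting this formula into the definition of $AC^{(m)}_n$ and passing to the limit coefficient by coefficient yields
\[
\lim_{n\to\infty} q^{m\binom{n}{2}}AC^{(m)}_n(q^{-1},t) = \sum_{k,d_2\geq 0} q^{k+d_2} t^{d_2}\, p(d_2,k) = \sum_{\mu\in\Par} q^{|\mu|} t^{\ell(\mu)} = \prod_{i\geq 1}\frac{1}{1-tq^i},
\]
where the middle equality uses the bijection sending $\mu$ with $\ell(\mu)=d_2$ and $|\mu| = k+d_2$ to the partition of $k$ with at most $d_2$ parts obtained by subtracting one from each part of $\mu$.

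The only real subtlety is bookkeeping: confirming that the hypothesis $k+d_2 < n/2-1$ of Theorem~\ref{thm:bound} holds simultaneously for every $d_2$ in the squeezing sum. This is automatic because $k+d_2=a$ is constant along that sum, so the single inequality $a < n/2-1$ suffices. Apart from that, each remaining step is either a substitution of an equality proved elsewhere, an application of Theorem~\ref{thm:bound} or Lemma~\ref{lem:sum p}, or the standard generating-function expansion for $\prod_{i\geq 1}(1-tq^i)^{-1}$.
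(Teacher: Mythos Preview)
Your proposal is correct and follows essentially the same route as the paper's proof: both combine the upper bound from Theorem~\ref{thm:bound} with the specialization $AC^{(m)}_n(q,1)=\sum_{\pi}q^{\area(\pi)}$ (you phrase it via $PC^{(m)}_n$, the paper via Dyck paths, but these are equivalent), then squeeze using Lemma~\ref{lem:sum p} with $a=m\binom{n}{2}-d_1$ fixed. One small remark: when you write that the coefficient of $q^a$ on the left is $\sum_{d_2=0}^{a}\dim M^{(m)}_{m\binom{n}{2}-a,d_2}$, you are implicitly using that $M^{(m)}_{d_1,d_2}=0$ whenever $d_1+d_2>m\binom{n}{2}$; the paper does the same without comment, and it follows easily from the $m=1$ case (Theorem~\ref{thm:LL}) since any factor $\Delta(D_i)$ of excess total degree already lies in $\mathfrak m I$. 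Also, the phrase ``once $n$ is large enough'' is unnecessary, since the hypothesis $a<n/2-1$ already forces every partition of $a$ to fit in the triangle.
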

\begin{proof}
We recalled in the introduction that $AC^{(m)}_n(q,1)=RC^{(m)}_n(q,1)
=\sum_{\pi\in\mathcal{D}^{(m)}_n} q^{\area(\pi)}$.
Then for each $d_1$, $\sum_{d_2=0}^\infty \dim M_{d_1,d_2}^{(m)}$ is the
number of $m$-Dyck paths $\pi\in\mathcal{D}^{(m)}_n$ with
$\area(\pi)=d_1$. Each such $m$-Dyck path uniquely determines a Ferrers
diagram of size $m\binom{n}{2}-d_1$ consisting of the
set of boxes above the $m$-Dyck path in the $mn\times n$ triangle.
On the other hand, since any Ferrers diagram of size less than $n$
determines an
$m$-Dyck path and $m\binom{n}{2}-d_1=k+d_2<n$, each Ferrers diagram of size
$m\binom{n}{2}-d_1$ determines an $m$-Dyck path in $\mathcal{D}^{(m)}_n$.
Therefore the number of such $m$-Dyck paths is equal to the partition number
$p\left(m\binom{n}{2}-d_1\right)$, and
$$\sum_{d_2=0}^{m\binom{n}{2}-d_1} \dim
M_{d_1,d_2}^{(m)}=p\left(m\binom{n}{2}-d_1\right)
=\sum_{d_2=0}^{m\binom{n}{2}-d_1}
p\left(d_2,m\binom{n}{2}-d_1-d_2\right),$$
where the second equality is because of Lemma~\ref{lem:sum p}.
On the other hand, Theorem~\ref{thm:bound} asserts that
$$\dim M_{d_1,d_2}^{(m)}\le p\left(d_2,m\binom{n}{2}-d_1-d_2\right).$$
Therefore each inequality is actually an equality. This implies
$\dim M^{(m)}_{d_1, d_2}=p(d_2,k)$.

To prove the consequence, note that for any fixed nonnegative integers $k,h$, whenever $n>2(k+h+1)$,
the coefficient of $q^{m\binom{n}{2}-k-h}t^h$ in
$AC_n^{(m)}(q,t)$ is equal to $p(h,k)$. Therefore
$$\aligned
\lim_{n\to\infty} q^{m\binom{n}{2}}AC_n^{(m)}(q^{-1},t)
&=\sum_{k,h\ge0}p(h,k)q^{m\binom{n}{2}-(m\binom{n}{2}-k-h)}t^h\\
&=\sum_{k,h\ge0}p(h,k)q^{k+h}t^h
=\prod_{i=1}^\infty (1-tq^i)^{-1}. 
\endaligned
$$
The second equality of the consequence is because of Theorem 
\ref{thm:LW}.
\end{proof}

\section{Comparison of Coefficients of $AC^{(m)}_n(q,t)$ to Partition Numbers}
This section proves Theorem \ref{thm:bound2} by showing the two inequalities $\dim M_{d_1,d_2}^{(m)}\le p(d_2,k)$ and $\dim M_{d_1,d_2}^{(m)}\ge p(d_2,k)$ separately. 
We first use Grafting Lemma (Lemma \ref{lem:grafting}) to prove the Higher Transfactor Lemma (Lemma \ref{lem:higher transfactor}), then use both lemmas to prove Lemma \ref{lem:mm}, which plays a key role in the proof of the former inequality. 
Finally, we complete the proof of Theorem \ref{thm:bound2} by showing
the latter inequality using results from section 3 and 4.

\begin{lem}[Grafting Lemma]\label{lem:grafting}
Let $D_1=\{P_1,\dots,P_n\}$ and $D_2=\{Q_1,\dots,Q_n\}$ be in $\D_n$,
where the $P_i$ and $Q_i$ are listed in increasing graded lexicographic order.
Suppose
$|P_r|=|Q_r|=r-1$. Let $D'_1=\{P_1,\dots,P_{r-1},Q_r,\dots,Q_n\}$ and
$D'_2=\{Q_1,\dots,Q_{r-1},P_r,\dots,P_n\}$. Then
$$\Delta(D_1)\cdot\Delta(D_2)\equiv\Delta(D'_1)\cdot\Delta(D'_2)$$ in $M^{(2)}$.\qed
\end{lem}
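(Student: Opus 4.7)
My plan is to exploit the block-lower-triangular structure that staircase forms acquire under the hypothesis $|P_r| = |Q_r| = r - 1$. Together with the graded lex ordering of the column points, this hypothesis forces $|P_j|, |Q_j| \geq r - 1$ for every $j \geq r$, so the $(i,j)$-entry of any staircase form of $D_1, D_2, D_1'$, or $D_2'$ vanishes whenever $i < r \leq j$. Consequently, each such staircase form splits into an $(r-1) \times (r-1)$ top-left block (depending only on the first $r-1$ column points of the configuration) and an $(n-r+1) \times (n-r+1)$ bottom-right block (depending only on the last $n-r+1$ column points), with zero top-right block.

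I will then use the flexibility in Definition-Proposition~\ref{definitionofstaircaseform}: each column of a staircase form is described by data local to that column (the choice of whether each $z_{i\ell}$ is $x_i - x_\ell$ or $y_i - y_\ell$), so two configurations sharing a column point can be assigned identical column entries in the shared row range. Pick arbitrary staircase forms $S_1, S_2$ of $D_1, D_2$, written as
\[
S_1 = \begin{pmatrix} A_1 & 0 \\ * & E_1 \end{pmatrix}, \qquad S_2 = \begin{pmatrix} A_2 & 0 \\ * & E_2 \end{pmatrix}.
\]
Then choose staircase forms $S_1', S_2'$ of $D_1', D_2'$ whose columns in positions $1, \ldots, r-1$ agree with those of $S_1, S_2$ respectively, and whose columns in positions $r, \ldots, n$ agree with those of $S_2, S_1$ respectively; this yields
\[
S_1' = \begin{pmatrix} A_1 & 0 \\ * & E_2 \end{pmatrix}, \qquad S_2' = \begin{pmatrix} A_2 & 0 \\ * & E_1 \end{pmatrix}.
\]

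Block-lower-triangularity then yields the polynomial identity
\[
\det S_1 \cdot \det S_2 = (\det A_1)(\det E_1)(\det A_2)(\det E_2) = \det S_1' \cdot \det S_2'.
\]
To lift this equality to $M^{(2)}$, I will write $\Delta(D_i) = \det S_i + r_i$ and $\Delta(D_i') = \det S_i' + r_i'$ with each $r_i, r_i' \in \mathfrak{m} I$ by Definition-Proposition~\ref{definitionofstaircaseform}; then
\[
\Delta(D_1)\Delta(D_2) - \det S_1 \det S_2 = r_1 \Delta(D_2) + (\det S_1) r_2
\]
lies in $\mathfrak{m} I \cdot I \subseteq \mathfrak{m} I^2$, so $\Delta(D_1)\Delta(D_2) \equiv \det S_1 \det S_2$ in $M^{(2)}$, and the same argument gives $\Delta(D_1')\Delta(D_2') \equiv \det S_1' \det S_2'$. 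Combining these congruences with the polynomial identity completes the proof.

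The main technical point to verify is that the construction of Definition-Proposition~\ref{definitionofstaircaseform} admits this column-wise consistency across the four configurations. Since each column's defining data depends only on its column point, and since the bottom-left ``$*$'' entries contribute nothing to the determinant by block-triangularity, we have the freedom required to enforce the shared-block structure above, and no additional obstacle should arise.
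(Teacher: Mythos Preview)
Your proposal is correct and follows essentially the same approach as the paper, which simply remarks that the lemma ``can be obtained by switching blocks in block diagonal forms of $D_1$ and $D_2$'' and illustrates with an example. Your write-up supplies the details the paper omits: the block-lower-triangular structure forced by $|P_r|=|Q_r|=r-1$, the column-wise nature of the staircase-form construction that lets you splice columns from $S_1,S_2$ into valid staircase forms of $D_1',D_2'$, and the passage from the exact determinant identity to the congruence in $M^{(2)}$ via $r_i\in\mathfrak{m}I$.
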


This can be obtained by switching blocks in block diagonal forms of $D_1$ and $D_2$, so we omit the detailed proof of the lemma. The following example illustrates the
idea of the proof.

\begin{exmp} Consider $D_1$, $D_2$, $D_1'$, and $D_2'$ pictured below.
 $$%
    \setlength{\unitlength}{1.2pt}
    \begin{picture}(60,50)(-20,0)
    \put(-20,5){$D_1=$}
    \put(10,0){\circle*{5}}\put(20,0){\circle*{5}}
    \put(20,10){\circle*{5}}\put(30,0){\circle*{5}}\put(40,0){\circle*{5}}
    \boxs{10,0}\boxs{20,0}\boxs{30,0}
    \boxs{10,10}\boxs{20,10}\boxs{30,10}
    \boxs{10,20}\boxs{20,20}\boxs{30,20}
    \linethickness{1pt}\put(10,0){\line(0,1){37}}
    \linethickness{1pt}\put(10,0){\line(1,0){40}}
    \multiput(5,20)(2,-2){13}{\circle*{.9}}
    \multiput(7,42)(2,-2){24}{\circle*{.9}}
    \multiput(5,23)(0,3){8}{\circle*{.9}}
    \multiput(30,-5)(3,0){8}{\circle*{.9}}
    \end{picture}
    \setlength{\unitlength}{1.2pt}
    \begin{picture}(110,50)(-50,0)
    \put(-20,5){$D_2=$}
    \put(10,0){\circle*{5}}\put(10,10){\circle*{5}}
    \put(10,20){\circle*{5}}\put(20,10){\circle*{5}}\put(30,0){\circle*{5}}
    \boxs{10,0}\boxs{20,0}\boxs{30,0}
    \boxs{10,10}\boxs{20,10}\boxs{30,10}
    \boxs{10,20}\boxs{20,20}\boxs{30,20}
    \linethickness{1pt}\put(10,0){\line(0,1){37}}
    \linethickness{1pt}\put(10,0){\line(1,0){40}}
    \put(5,20){\line(1,-1){25}}
    \put(5,30){\line(1,-1){35}}
    \put(5,20){\line(0,1){10}}
    \put(30,-5){\line(1,0){10}}
    \end{picture}
    \setlength{\unitlength}{1.2pt}
    \begin{picture}(80,50)(-50,0)
    \put(-40,5){$\leadsto\quad D_1'=$}
    \put(10,0){\circle*{5}}\put(20,0){\circle*{5}}
    \put(10,20){\circle*{5}}\put(20,10){\circle*{5}}\put(30,0){\circle*{5}}
    \boxs{10,0}\boxs{20,0}\boxs{30,0}
    \boxs{10,10}\boxs{20,10}\boxs{30,10}
    \boxs{10,20}\boxs{20,20}\boxs{30,20}
    \linethickness{1pt}\put(10,0){\line(0,1){37}}
    \linethickness{1pt}\put(10,0){\line(1,0){40}}
    \put(5,20){\line(1,-1){25}}
    \put(5,30){\line(1,-1){35}}
    \put(5,20){\line(0,1){10}}
    \put(30,-5){\line(1,0){10}}
    \end{picture}
    \setlength{\unitlength}{1.2pt}
    \begin{picture}(80,50)(-50,0)
    \put(-20,5){$D_2'=$}
    \put(10,0){\circle*{5}}\put(10,10){\circle*{5}}
    \put(20,10){\circle*{5}}\put(30,0){\circle*{5}}\put(40,0){\circle*{5}}
    \boxs{10,0}\boxs{20,0}\boxs{30,0}
    \boxs{10,10}\boxs{20,10}\boxs{30,10}
    \boxs{10,20}\boxs{20,20}\boxs{30,20}
    \linethickness{1pt}\put(10,0){\line(0,1){37}}
    \linethickness{1pt}\put(10,0){\line(1,0){40}}
    \multiput(5,20)(2,-2){13}{\circle*{.9}}
    \multiput(7,42)(2,-2){24}{\circle*{.9}}
    \multiput(5,23)(0,3){8}{\circle*{.9}}
    \multiput(30,-5)(3,0){8}{\circle*{.9}}
    \end{picture}
$$
Then
{\small
$$
\Delta(D_1)\cdot\Delta(D_2)=
\det\begin{vmatrix}
 1&0&0&0&0\\
 0&x_{21}&0&0&0\\
 0&0&x_{31}y_{32}& x_{31}x_{32}&0\\
 0&0&x_{41}y_{42}& x_{41}x_{42}&x_{41}x_{42}x_{43}\\
 0&0&x_{51}y_{52}& x_{51}x_{52}&x_{51}x_{52}x_{53}\\
\end{vmatrix}
\cdot
\det\begin{vmatrix}
 1&0&0&0&0\\
 0&y_{21}&0&0&0\\
 0&0&y_{31}y_{32}& x_{31}y_{32}&x_{31}x_{32}\\
 0&0&y_{41}y_{42}& x_{41}y_{42}&x_{41}x_{42}\\
 0&0&y_{51}y_{52}& x_{51}y_{52}&x_{51}x_{52}\\
\end{vmatrix},
$$
}
and
{\small
$$
\Delta(D'_1)\cdot\Delta(D'_2)=
\det\begin{vmatrix}
 1&0&0&0&0\\
 0&x_{21}&0&0&0\\
 0&0&y_{31}y_{32}& x_{31}y_{32}&x_{31}x_{32}\\
 0&0&y_{41}y_{42}& x_{41}y_{42}&x_{41}x_{42}\\
 0&0&y_{51}y_{52}& x_{51}y_{52}&x_{51}x_{52}\\
\end{vmatrix}
\cdot
\det\begin{vmatrix}
 1&0&0&0&0\\
 0&y_{21}&0&0&0\\
 0&0&x_{31}y_{32}& x_{31}x_{32}&0\\
 0&0&x_{41}y_{42}& x_{41}x_{42}&x_{41}x_{42}x_{43}\\
 0&0&x_{51}y_{52}& x_{51}x_{52}&x_{51}x_{52}x_{53}\\
\end{vmatrix}.
$$
}
One readily verifies that the two products are equal.
\end{exmp}

\begin{defn} For $k\le n-4$ and $d_1+d_2+k=2\binom{n}{2}$,
define a subspace $N_{d_1,d_2}$ of $M^{(2)}_{d_1,d_2}$ by
$$N_{d_1,d_2}=\left\{\begin{array}{ll}
        M_{d_1-\binom{n}{2},d_2}\cdot f_{\binom{n}{2},0} & \textrm{ if $d_2\le k$}; \\
        M_{d_1,d_2-\binom{n}{2}}\cdot f_{0,\binom{n}{2}} & \textrm{ if $d_1\le k$}; \\
        M_{d_1+d_2-\binom{n}{2}-k,k}\cdot f_{\binom{n}{2}-d_2+k,d_2-k} & \textrm{  otherwise}.
      \end{array}
\right.
$$
\end{defn}

\begin{lem}[Higher Transfactor Lemma]\label{lem:higher transfactor}
Suppose $k\le n-4$, $d'_1\le d_1$, $d'_2\le d_2$, $d_1+d_2+k=2\binom{n}{2}$,
and $d'_1+d'_2+\binom{n}{2}=d_1+d_2$.

{\rm(i)}
If $d'_2<d_2$ and $d'_1\ge k+1$, then
 $$M_{d'_1,d'_2}\cdot f_{d_1-d'_1, d_2-d'_2}\subseteq M_{d'_1-1,d'_2+1}\cdot f_{d_1-d'_1+1, d_2-d'_2-1}$$
as subspaces of  $M^{(2)}_{d_1,d_2}$.

{\rm(ii)}
If $d'_1< d_1$ and $d'_2\ge k+1$, then
 $$M_{d'_1,d'_2}\cdot f_{d_1-d'_1, d_2-d'_2}\subseteq M_{d'_1+1,d'_2-1}\cdot f_{d_1-d'_1-1, d_2-d'_2+1}$$
as subspaces of  $M^{(2)}_{d_1,d_2}$.

{\rm(iii)}  $M_{d'_1,d'_2}\cdot f_{d_1-d'_1,d_2-d'_2}$ is a subspace
of $N_{d_1,d_2}$.
Moreover, if  $d'_1,d'_2\ge k$, then
$M_{d'_1,d'_2}\cdot f_{d_1-d'_1,d_2-d'_2}$ is equal to $N_{d_1,d_2}$.
\end{lem}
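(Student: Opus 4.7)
The plan is to prove (i) as the core step; part (ii) then follows from the $x\leftrightarrow y$ symmetry of all objects and lemmas involved (diagonal ideals, alternating polynomials, staircase forms, and the classes $f_{a,b}$ are all symmetric under this swap), and (iii) is derived from (i) and (ii) by iteration.

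For (i), take a typical generator of the left-hand side. By the conventions of Section~3 and Definition-Proposition~3.1, it can be represented, modulo lower degrees, by a product $\Delta(D_1)\cdot\Delta(D_2)$, where $D_1\in\D_n$ has bidegree $(d'_1,d'_2)$ with $\Delta(D_1)\equiv\det S_1$ for a minimal staircase form $S_1$, while $D_2=\{P^{(2)}_1,\dots,P^{(2)}_n\}$ satisfies $|P^{(2)}_i|=i-1$ for every $i$ and has bidegree $(d_1-d'_1,\,d_2-d'_2)$. The goal is to exhibit, modulo lower degrees, an equality $\Delta(D_1)\Delta(D_2)=\sum_j\Delta(\widetilde D_1^{(j)})\Delta(\widetilde D_2^{(j)})$, where each $\widetilde D_1^{(j)}$ has bidegree $(d'_1-1,\,d'_2+1)$ and each $\widetilde D_2^{(j)}$ is a staircase of bidegree $(d_1-d'_1+1,\,d_2-d'_2-1)$; such an expression then manifestly lies in $M_{d'_1-1,\,d'_2+1}\cdot f_{d_1-d'_1+1,\,d_2-d'_2-1}$.

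The mechanism combines the Grafting Lemma (Lemma~4.1) with the Transfactor Lemma (Lemma~3.4). First I would apply Transfactor moves inside $D_1$, which change neither $\Delta(D_1)$ modulo lower degrees nor its bidegree, to arrange that for some index $r$ the tail $\{P^{(1)}_r,\dots,P^{(1)}_n\}$ of $D_1$ is itself a staircase (so $|P^{(1)}_i|=i-1$ for $i\ge r$) whose total bidegree differs from that of the tail $\{P^{(2)}_r,\dots,P^{(2)}_n\}$ of $D_2$ by exactly $(-1,+1)$. Grafting at this $r$ then produces a pair $(\widetilde D_1,\widetilde D_2)$ with $\widetilde D_2$ still a staircase of the required bidegree and $\widetilde D_1$ of the required bidegree, and the product is preserved modulo lower degrees. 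The hypotheses $d'_1\ge k+1$ and $d'_2<d_2$ are precisely what guarantee such a configuration: $d'_1\ge k+1$ forces the staircase of $D_1$ to contain sufficiently many $1\times 1$ blocks (via Lemma~4.2) so that a suitable grafting index $r$ exists, while $d'_2<d_2$ provides the dot in $D_2$ of positive $y$-coordinate needed for the desired $x$-for-$y$ trade. The main obstacle I expect is a careful case analysis to verify the existence of $r$ and the Transfactor rearrangement in every possible block type; the bound $k\le n-4$ provides the ambient slack that makes all the relevant local rearrangements available.

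For (iii), iterate (i) and (ii) to move $(d'_1,d'_2)$ to the canonical position defining $N_{d_1,d_2}$: in the case $d_2\le k$, apply (i) repeatedly to raise $d'_2$ to $d_2$; in the case $d_1\le k$, apply (ii) symmetrically to raise $d'_1$ to $d_1$; and in the remaining case, apply (i) or (ii) as appropriate to move $d'_2$ to exactly $k$. At each step the required hypotheses on $d'_1,d'_2$ can be verified from $k\le n-4$ together with the identity $d_1+d_2+k=2\binom{n}{2}$. This gives $M_{d'_1,d'_2}\cdot f_{d_1-d'_1,d_2-d'_2}\subseteq N_{d_1,d_2}$. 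For the equality statement when $d'_1,d'_2\ge k$, the iteration can be reversed starting from the canonical representative of $N_{d_1,d_2}$ back to $(d'_1,d'_2)$; the hypotheses needed for (i) and (ii) remain satisfied throughout the reverse traversal because the first-factor coordinates stay $\ge k$ at every step, yielding the reverse inclusion and hence equality.
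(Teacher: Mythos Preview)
Your overall strategy for (i) --- use the Grafting Lemma to shift one unit of $x$-degree for one unit of $y$-degree between the two factors --- is the same as the paper's, and your handling of (ii) by symmetry and of (iii) by iteration matches the paper's one-line arguments. However, the execution you propose for (i) differs substantially from the paper's and contains a real gap.

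The paper does \emph{not} work with minimal staircase forms or the Transfactor Lemma here. Instead it invokes Lemma~\ref{lem:injective} (which rests on the injectivity of $\bar\varphi$ from \cite{LL}) together with the dimension formula of Theorem~\ref{thm:LL} to produce a basis of $M_{d'_1,d'_2}$ in which \emph{every} basis element $\Delta(D_i)$ has the \emph{same} prescribed last point $P_n$ with $|P_n|=n-1$. The point $P_n$ is specified explicitly (in three cases, according to how $d'_2$ compares with $k$ and $n-2+k$) so that its $x$-coordinate is positive. One then chooses the staircase representative of $f_{d_1-d'_1,d_2-d'_2}$ to have last point $P_n+(-1,1)$ and grafts at $r=n$; this immediately gives $\Delta(D_i)\cdot f_{d_1-d'_1,d_2-d'_2}\equiv \Delta(D'_i)\cdot f_{d_1-d'_1+1,d_2-d'_2-1}$, where $D'_i$ is $D_i$ with $P_n$ replaced by $P_n+(-1,1)$. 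No analysis of block structure is needed.

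Your route, by contrast, starts from an arbitrary minimal-staircase representative and tries to use Transfactor moves to force the tail of $D_1$ from some index $r$ to be a staircase of a \emph{prescribed} bidegree. But the Transfactor Lemma moves two size-$1$ block points in opposite directions and hence preserves the bidegree of $D_1$; applied within the tail it also preserves the tail bidegree, and to change the tail bidegree one of the moved points would have to lie in the head, where the hypotheses $|P_i|=i-1$, $|P_{i+1}|=i$ of the Transfactor Lemma generally fail (that is where the larger blocks sit). Moreover, the lower bound on the number of $1\times 1$ blocks from Lemma~\ref{lem:k} depends only on $k(D_1)=k$, not on $d'_1$, so the hypothesis $d'_1\ge k+1$ does not buy you extra $1\times 1$ blocks as you suggest. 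You correctly flag this rearrangement step as ``the main obstacle''; the paper's device of fixing the last point via Lemma~\ref{lem:injective} is precisely the idea that removes it.
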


\begin{proof}
(i)
Let $$
P_n=\left\{\begin{array}{ll}
        (n-1,0), & \textrm{ if $d'_2<k$}; \\
        (n-1-d'_2+k,d'_2-k), & \textrm{ if $k\le d'_2\le n-2+k$}; \\
        (1,n-2), & \textrm{  if $d'_2>n-2+k$}.
\end{array}
\right.
$$
Then there exists a basis $\{\Delta(D_i)\}$
of $M_{d_1,d_2}$ such that the last point of each $D_i$ is $P_n$.
Indeed, consider the first case $d'_2<k$. Let $M'_{d'_1-(n-1),d'_2}$ be the
indicated graded piece of $I_{n-1}/\mathfrak{m}_{n-1}I_{n-1}$.
Let $\{\Delta(D'_i)\}$ be a basis of $M'_{d'_1-(n-1),d'_2}$, and let $D_i$
be obtained from $D'_i$ by adding the point $P_n$. Since
$M'_{d''_1,d'_2}$ and $M_{d_1,d_2}$ have the same dimension $p(d_2,k)$,
 Lemma \ref{lem:injective} implies that $\{\Delta(D_i)\}$ forms a basis of $M_{d_1,d_2}$. The other two cases can be proved similarly.

Now for each $D_i=\{P_1,\dots,P_n\}$,
define $D'_i=\{P_1,\dots,P_{n-1},P_n+(-1,1)\}$.
By the Grafting Lemma~\ref{lem:grafting}, we have
$\Delta(D_i)\cdot f_{d_1-d'_1,d_2-d'_2}\equiv\Delta(D'_i)\cdot f_{d_1-d'_1+1,d_2-d'_2-1}$ in $M^{(2)}_{d_1,d_2}$.
Then the inclusion stated in (i) follows immediately.

(ii) This is symmetric to (i).

(iii) This follows from (i) and (ii).
\end{proof}

\begin{lem}\label{lem:mm}
Assume $n,d'_1,d'_2,k',d''_1,d''_2,k''\in\N$ satisfy $n\geq 6$,
$k'=\binom{n}{2}-d'_1-d'_2$, $k''=\binom{n}{2}-d''_1-d''_2$,
$k'+k''\le n-6$, and $(d'_1,d'_2)+(d''_1,d''_2)=(d_1,d_2)$. Then
 $$M_{d'_1,d'_2}\cdot M_{d''_1,d''_2}\subseteq N_{d_1,d_2}$$
as subspaces of $M^{(2)}_{d_1,d_2}$.
\end{lem}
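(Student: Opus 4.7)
The plan is to reduce the containment to products of minimal staircase determinants, extract an $f_{e_1, e_2}$ factor in an appropriate bidegree, and then invoke Lemma~\ref{lem:higher transfactor}(iii). By bilinearity of the multiplication $M_{d'_1, d'_2} \times M_{d''_1, d''_2} \to M^{(2)}_{d_1, d_2}$ together with the fact that alternating polynomials span $I/\mathfrak{m} I$, it suffices to consider products of the form $\Delta(D_1) \cdot \Delta(D_2)$ for $D_1, D_2 \in \D_n$ of the prescribed bidegrees. The hypothesis $k' + k'' \le n - 6$ forces $k', k'' \le n - 3$, so Theorem~\ref{thm:LL} applies and the construction from \cite{LL} (the $m=1$ case of Theorem~\ref{thm:bound}) lets us replace each $\Delta(D_i)$, modulo lower degrees, by the determinant $\det(S_i)$ of a minimal staircase form of the appropriate bidegree and some partition type.

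The core computation then mirrors the proof of claim $(*)$ in Theorem~\ref{thm:bound}: position the $1\times 1$ blocks of $B(S'_\nu)$ at indices $a, a+1, \dots, n$ (at the bottom) and those of $B(S''_\lambda)$ at indices $1, 2, \dots, b$ (at the top); Lemma~\ref{lem:k} then guarantees $n - a + 2 \ge n - 2k'$ and $b \ge n - 2k''$. Using Lemma~\ref{lem:transfactor} repeatedly to adjust the $(x_i - x_j)$ versus $(y_i - y_j)$ choices in the size-$1$ block entries of the overlap region, one rewrites $\det(S'_\nu) \cdot \det(S''_\lambda) \equiv A \cdot g$ modulo lower degrees, where $g$ is a product of $\binom{n}{2}$ linear factors with a prescribed distribution of $x$- and $y$-types and $A$ is a polynomial in the complementary bidegree. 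Choose the distribution so that $g$ represents $f_{e_1, e_2}$ in $M_{e_1, e_2}$ with $(e_1, e_2) = (\binom{n}{2}, 0)$ when $d_2 \le k$, $(e_1, e_2) = (0, \binom{n}{2})$ when $d_1 \le k$, and $(e_1, e_2) = (\binom{n}{2} + k - d_2, d_2 - k)$ otherwise. Then $A \in M_{d_1 - e_1, d_2 - e_2}$, so the product lies in $M_{d_1 - e_1, d_2 - e_2} \cdot f_{e_1, e_2}$, and Lemma~\ref{lem:higher transfactor}(iii) places this subspace inside $N_{d_1, d_2}$.

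The main obstacle is the ``otherwise'' case, which demands a carefully mixed distribution of $x$- and $y$-factors in $g$. Cases~1 and~2 follow almost verbatim from the argument for $(*)$ and its $x$-$y$-symmetric counterpart, producing $f_{\binom{n}{2}, 0}$ or $f_{0, \binom{n}{2}}$ directly. For Case~3, one must verify that the Transfactor Lemma can redistribute factors to produce exactly $d_2 - k$ factors of $y$-type and $\binom{n}{2} + k - d_2$ factors of $x$-type across the overlap region $[a, b]$; the strengthened hypothesis $k' + k'' \le n - 6$ (compared with the stricter $k < n/2 - 1$ used in Theorem~\ref{thm:bound}) is precisely what guarantees enough size-$1$ blocks in each of $B(S'_\nu)$ and $B(S''_\lambda)$ and enough room in the overlap region to execute this redistribution without introducing lower-degree error terms beyond our control.
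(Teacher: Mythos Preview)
Your approach has a genuine gap. The computation you sketch models the proof of claim $(*)$ in Theorem~\ref{thm:bound}, but that argument needs the size-$1$ block regions of $B(S'_\nu)$ and $B(S''_\lambda)$ to overlap, and the hypothesis $k'+k'' \le n-6$ is \emph{weaker} than the $k < n/2-1$ used there (it permits larger $k$), not stronger as your last paragraph suggests. Concretely, Lemma~\ref{lem:k} gives only $a \le 2k'+2$ and $b \ge n-2k''$, so $b-a \ge n - 2(k'+k'') - 2 \ge 10 - n$, which is negative once $n > 10$. When $a > b$, the columns indexed by $b+1,\dots,a-1$ lie inside larger blocks for \emph{both} $S'_\nu$ and $S''_\lambda$; neither factor contributes a clean product of linear forms there, and no factorization $\det(S'_\nu)\det(S''_\lambda) \equiv A \cdot g$ with $g$ representing some $f_{e_1,e_2}$ is available. (Take, for instance, $n=100$ and $k'=k''=47$.) A secondary issue: invoking the $m=1$ case of Theorem~\ref{thm:bound} to pass to minimal staircase forms already presupposes $d'_2 < n/2-1$ (or the symmetric bound on $d'_1$), which is not a hypothesis of Lemma~\ref{lem:mm}.

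The paper sidesteps both problems by using the Grafting Lemma (Lemma~\ref{lem:grafting}) directly rather than a Transfactor-based factorization. It sets $n'=k'+3$ and constructs a basis of $M_{d'_1,d'_2}$ consisting of $\Delta(D')$ with $|P'_i|=i-1$ for all $i>n'$, and a basis of $M_{d''_1,d''_2}$ consisting of $\Delta(D'')$ with $|P''_i|=i-1$ for all $i\le n'$ (the existence of such bases is where the bound $k'+k''\le n-6$ and the dimension counts from Theorem~\ref{thm:LL} enter). A single application of the Grafting Lemma at position $n'+1$ then gives $\Delta(D')\Delta(D'') \equiv \Delta(D^{\sharp})\Delta(D^{\flat})$ where $D^{\flat}=\{P''_1,\dots,P''_{n'},P'_{n'+1},\dots,P'_n\}$ has \emph{every} point on the anti-diagonal, so $\Delta(D^{\flat})$ already represents some $f_{*,*}$, and Lemma~\ref{lem:higher transfactor}(iii) finishes. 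The crucial point is that grafting requires only a single common block boundary, not an entire overlap interval, and the special bases are engineered to provide exactly that boundary at $r=n'+1$.
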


\begin{proof}
Define $n'=k'+3$. First, we claim that
$M_{d'_1,d'_2}$ has a basis
consisting of elements of the form
$$\Delta(D')=\Delta(\{P'_1,\dots,P'_n\}), \textrm{ where }
|P'_i|=i-1 \textrm{ for } n'+1\le i\le n,$$
and $M_{d''_1,d''_2}$ has a basis
consisting of elements of the form
$$\Delta(D'')=\Delta(\{P''_1,\dots,P''_n\}),
\textrm{ where } |P'_i|=i-1 \textrm{ for } 0\le i\le n'.$$
Indeed, one may find a pair of integers $(e'_1,e'_2)$ such that
 $$\min(d'_1,k')\le e'_1\le d'_1\le e'_1+\binom{n}{2}-\binom{n'}{2},$$
 $$\min(d'_2,k')\le e'_2\le d'_2\le e'_2+\binom{n}{2}-\binom{n'}{2},$$
 $$\mbox{and } e'_1+e'_2=\binom{n'}{2}-k'.$$
Then we choose $P'_{n'+1},\dots,P'_n$ such that $|P'_i|=i-1$ for
$n'+1\le i\le n$, and the sum of their bidegrees is $(d'_1-e'_1,d'_2-e'_2)$.
Choose a basis $\{\Delta(\tilde{D}')\}$
of $(I_{n'}/\mathfrak{m}_{n'}I_{n'})_{e'_1,e'_2}$
and replace each $\tilde{D}'=\{Q_1,\dots,Q_{n'}\}$ by
$$D'=\{Q_1,\dots,Q_{n'},P'_{n'+1},\dots,P'_n\}.$$
In this way, we obtain a basis for $M_{d'_1,d'_2}$. On the other hand,
one can verify that there exist a pair of integers $(e''_1,e''_2)$
and a nonnegative integer $c\le n'$  such that
 $$\min(d''_1,k'')\le e''_1\le d''_1-(n-n')c\le e''_1+\binom{n'}{2},$$
 $$\min(d''_2,k'')\le e''_2\le d''_2-(n-n')(n'-c)\le e''_2+\binom{n'}{2},$$
 $$\mbox{and } e''_1+e''_2=\binom{n-n'}{2}-k''.$$
Then we choose $P''_1,\dots,P''_{n'}$ such that $|P'_i|=i-1$
for $1\le i\le n'$, and the sum of their bidegrees is
$(d''_1-(n-n')c-e''_1,d''_2-(n-n')(n'-c)-e''_2)$. Take a
basis $\{\Delta(\tilde{D}'')\}$ of
$(I_{n-n'}/\mathfrak{m}_{n-n'}I_{n-n'})_{e''_1,e''_2}$,
and replace each $\tilde{D}''=\{Q_1,\dots,Q_{n-n'}\}$ by
$$D''=\big{\{}P''_1,\dots, P''_{n'}, Q_1+(c,n'-c),Q_2+(c,n'-c),\dots,Q_{n-n'}
+(c,n'-c)\big{\}}.$$
In this way, we obtain a basis for $M_{d''_1,d''_2}$.

Next, using the Grafting Lemma~\ref{lem:grafting},
$$\Delta(D')\Delta(D'')\equiv
\Delta(\{P'_1,\dots,P'_{n'},P''_{n'+1},\dots,P''_n\})\Delta(\{P''_1,\dots,
P''_{n'},P'_{n'+1},\dots,P'_n\}),$$
hence is in $N_{d_1,d_2}$ by Lemma \ref{lem:higher transfactor}(iii).
\end{proof}

\begin{proof}[Proof of Theorem \ref{thm:bound2}]
Without loss of generality, we assume $d_1\ge d_2$. After applying Lemma \ref{lem:mm} successively, we can conclude that $$M_{d_1,d_2}^{(m)}=M_{d_1-a,d_2-b}\cdot g_{a,b}$$ for some nonnegative integers $a, b$, where $a+b=(m-1)\binom{n}{2}$, and $g_{a,b}=\prod_{i=1}^{m-1} f_{a_i,b_i}$ has bidegree $(a,b)$. Moreover, by inspecting the proof of Lemma \ref{lem:mm} carefully, we can assume $b=\max(0, d_2-k)$.
Therefore
$$\dim M_{d_1,d_2}^{(m)}=\dim (M_{d_1-a,d_2-b}\cdot g_{a,b})\le \dim M_{{d_1-a,d_2-b}}\le p(d_2,k),$$
where the last inequality is because of Theorem \ref{thm:LL}. 

Now we prove $\dim M_{d_1,d_2}^{(m)}\ge p(d_2,k)$.
Take a sufficiently large integer $\tilde{n}>n$ such that
$k, d_2<\tilde{n}/2-1$. Let $\tilde{M}$ be
$I_{\tilde{n}}/\mathfrak{m}_{\tilde{n}}I_{\tilde{n}}$. Let
$$\tilde{f}_0=\prod_{j=n+1}^{\tilde{n}}\prod_{i=1}^{j-1}(x_j-x_i).$$
Define $\tilde{d}_1=d_1+(n+\tilde{n}-1)(\tilde{n}-n)/2$. By applying Lemma \ref{lem:injective} successively,
we conclude that the linear map $h: M_{d_1-a,d_2-b}\to \tilde{M}_{\tilde{d}_1-a,d_2-b}$ that sends
$f$ to  $f\cdot \tilde{f}_0$ is injective. Moreover, since $k\le n-6$,
the domain and the codomain of $h$ have the same dimension $p_{d_2,k}$.
So $h$ is also surjective. Consider the following commutative diagram:
$$
\xymatrix{
M_{d_1-a,d_2-b}\ar[r]^h\ar[d]_{\psi_1}&\tilde{M}_{\tilde{d}_1-a,d_2-b}\ar[d]^{\tilde{\psi}_1}\\
M_{d_1-a,d_2-b}\cdot g_{a,b}\ar[r]\ar[d]_{\psi_2}&\tilde{M}_{\tilde{d}_1-a,d_2-b}\cdot\tilde{g}_{a,b}\ar[d]^{\tilde{\psi}_2}\\
M^{(m)}_{d_1,d_2}\ar[r]&\tilde{M}^{(m)}_{\tilde{d}_1,d_2}
}$$
where $\tilde{g}_{a,b}=g_{a,b}\cdot(\tilde{f}_0)^{m-1}$, $\psi_1(f)=f\cdot g_{a,b}$, $\tilde{\psi}_1(f)=f\cdot \tilde{g}_{a,b}$, and both the middle and bottom horizontal maps are given by $f\mapsto f\cdot (\tilde{f}_0)^m$.
Since $h$ and $\tilde{\psi_1}$ are surjective and $\tilde{\psi}_2$ is
an isomorphism, the bottom horizontal map is surjective.
By Corollary~\ref{02102011vv},
$$\dim M^{(m)}_{d_1,d_2}\ge \dim\tilde{M}^{(m)}_{\tilde{d}_1,d_2}=p(d_2,k).$$ Thus the theorem is proved.
\end{proof}

In fact, we expect a stronger statement to hold:
\begin{conj}
Let $n\ge2, m\ge2, d_1, d_2, k$ be positive integers such that $k=m\binom{n}{2}-d_1-d_2$.
Define $\delta=\min(d_1,d_2)$. Then $\dim M^{(m)}_{d_1,d_2}\le p(\delta,k)$.
Moreover, equality holds if and only if $k\leq n-2$.
\end{conj}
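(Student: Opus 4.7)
The plan is to prove both parts of the conjecture by extending the techniques developed in Sections 3--5 beyond the narrow range treated in Theorem \ref{thm:bound2}. For the upper bound $\dim M^{(m)}_{d_1,d_2}\le p(\delta,k)$, I would induct on $m$, with base case $m=1$ supplied by Theorem \ref{thm:LL}. In the inductive step the key is to generalize Lemma \ref{lem:mm} by relaxing its hypothesis $k'+k''\le n-6$. In the existing proof the $n$ points split into groups of sizes $n'=k'+3$ and $n-n'\ge k''+3$; both pieces need enough room to apply results on $M$, which forces the restrictive bound. A more flexible splitting with $n'=k'+2$ and $n-n'=k''+2$, combined with the identity $M^{(m)}=M\cdot M^{(m-1)}$ and the Grafting Lemma, should produce $\dim M^{(m)}_{d_1,d_2}\le \dim M_{d_1-a,d_2-b}\le p(\delta,k)$ after choosing $a,b$ with $\min(d_1-a,d_2-b)\le\delta$.

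For the equality direction when $k\le n-2$, I would refine Lemma \ref{lem:injective} so that $\bar{\varphi}$ remains injective up through $k=n-2$ (rather than only $k\le n-3$), and then rerun the chain of injections at the end of the proof of Theorem \ref{thm:bound2}: choose $\tilde{n}\gg n$ with $k\le\tilde{n}/2-1$ so that Corollary \ref{02102011vv} already gives equality over $\tilde{n}$, and descend via the maps $f\mapsto f\cdot\tilde{f}_0$. The extra $k=n-2$ partitions that obstruct equality in the $m=1$ setting (cf.\ Theorem \ref{thm:LL}) should now become reachable because for $m\ge 2$ the factor $\prod_{i<j}(x_i-x_j)^{m-1}$ supplies the extra $x$-degrees that were missing in the staircase forms for $m=1$; this additional flexibility is exactly what distinguishes the $m\ge 2$ bound $k\le n-2$ from the $m=1$ bound $k\le n-3$.

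For strict inequality when $k\ge n-1$, the goal is to show that a suitable extension of $\bar{\varphi}$ to a map $M^{(m)}_{d_1,d_2}\to\C[\rho]_k$ has nontrivial cokernel. I would hunt for a partition $\nu\in\Par(\delta,k)$ for which $\rho_\nu$ cannot appear as the leading monomial of $\bar{\varphi}(f)$ for any $f\in M^{(m)}_{d_1,d_2}$; once $k\ge n-1$, any associated Ferrers diagram has a cell forced outside the $n\times n$ grid, so the corresponding staircase form must vanish. The main obstacle will be ruling out that cancellation in an $m$-fold determinantal product $\Delta(D_1)\cdots\Delta(D_m)$ recovers the missing partition through some nontrivial combination of decompositions with $\sum d_1(D_i)=d_1$, $\sum d_2(D_i)=d_2$. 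I expect this to require a careful refinement of the staircase-form analysis of \cite{LL}, tracking how the obstructed $\rho_\nu$ interacts with the higher transfactor moves, and a bigraded Hilbert-series argument comparing the upper bound $p(\delta,k)$ to the actual count of independent classes produced by the grafting and transfactor reductions.
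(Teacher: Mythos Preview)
The statement you are attempting to prove is not a theorem in the paper; it is explicitly a \emph{conjecture}, introduced with the words ``In fact, we expect a stronger statement to hold.'' The paper provides no proof, and Theorem~\ref{thm:bound2} only establishes the equality $\dim M^{(m)}_{d_1,d_2}=p(\delta,k)$ under the much stronger hypothesis $k\le n-6$. There is therefore nothing in the paper to compare your proposal against.

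As for the proposal itself, it is a strategy outline rather than a proof, and each of its three parts faces a genuine obstacle that the paper's methods do not currently overcome. For the upper bound, relaxing Lemma~\ref{lem:mm} from $k'+k''\le n-6$ to something like $k'+k''\le n-4$ is not a matter of bookkeeping: the proof uses Lemma~\ref{lem:injective}, which in turn relies on the injectivity of $\bar\varphi$, and that injectivity is only known for $k\le n-3$ (it is itself a conjecture in~\cite{LL} beyond that range). For the equality direction at $k=n-2$, your appeal to the extra factor $\prod_{i<j}(x_i-x_j)^{m-1}$ is suggestive but not an argument; the actual difficulty is that for $m=1$ and $k=n-2$, $\delta\ge 2$, the space $M_{d_1,d_2}$ is strictly smaller than $p(\delta,k)$ by Theorem~\ref{thm:LL}, so you must produce genuinely new elements of $M^{(m)}_{d_1,d_2}$ that do not arise as $M_{d_1-a,d_2-b}\cdot g_{a,b}$ for a single $(a,b)$. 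Finally, for the strict inequality when $k\ge n-1$, no map $\bar\varphi$ from $M^{(m)}_{d_1,d_2}$ to $\C[\rho]_k$ has been defined in the paper for $m\ge 2$, and your remark that ``the corresponding staircase form must vanish'' conflates the $m=1$ staircase machinery with the $m$-fold product setting. In short, the proposal identifies the right pressure points but does not supply the missing ideas; the statement remains open.
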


\section{Conjectures}

\begin{conj}
For $\pi \in \mathcal{D}_n^{(m)}$ and $1\leq i \leq mn$,
let $a_i(\pi)$ be the number of full squares in the $i$'th column
below $\pi$ and above the line $my=x$, and let $b_{i}(\pi)$ be the
number of full squares $w$ in the $i$'th column which are above $\pi$
and satisfy
$$ m\cdot l(w)  \leq a(w)  \leq  m(l(w)+1).  $$
For $\pi \in \mathcal{D}_n^{(m)}$ and $1\leq j\leq m$, let
 $$D_j(\pi)=\{(a_j(\pi), b_{j}(\pi)),  (a_{j+m}(\pi), b_{j+m}(\pi)),  \ldots,
(a_{j+m(n-1)}(\pi), b_{j+m(n-1)}(\pi))  \}
\subset \mathbb{N}\times \mathbb{N}.$$
Then $\{\prod_{j=1}^m \det(D_j(\pi)):\pi \in \mathcal{D}_n^{(m)}\}$ generates
the $m$-th power $I_n^{m}$ of the ideal $I_n$ generated by
alternating polynomials in $\C[x_1,y_1,\ldots,x_n,y_n]$.
\end{conj}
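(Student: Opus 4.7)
The plan is to apply Nakayama's lemma: since each factor $\det(D_j(\pi)) = \Delta(D_j(\pi))$ is an alternating polynomial in $\C[\mathbf{x},\mathbf{y}]$, the product $\prod_{j=1}^m \det(D_j(\pi))$ lies in $I_n^m$, and by the graded Nakayama lemma these products generate $I_n^m$ as an ideal exactly when their images span $M^{(m)} = I_n^m / \mathfrak{m}_n I_n^m$ as a $\C$-vector space. So everything reduces to a spanning problem in a finite-dimensional bigraded space. First I would verify that the $n$ points $(a_{j+mi}(\pi), b_{j+mi}(\pi))$, $i = 0, \dots, n-1$, are pairwise distinct, so that $D_j(\pi) \in \mathfrak{D}_n$ and $\det(D_j(\pi))$ is a genuine alternant rather than zero. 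Then I would compute the bidegree of $\prod_j \det(D_j(\pi))$ as $\sum_{i=1}^{mn}(a_i(\pi), b_i(\pi))$; reading off the definitions gives $\sum_i a_i(\pi) = \area(\pi)$ and $\sum_i b_i(\pi) = c_m(\lambda(\pi))$, where $\lambda(\pi)$ is the partition above $\pi$ in the $mn \times n$ triangle, so the conjectured generators are distributed among bidegrees exactly as the monomials of $DC_n^{(m)}(q,t) = PC_n^{(m)}(q,t)$.

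The core of the proof would be to show, for each fixed bidegree $(d_1,d_2)$, that the subset $\{\prod_j \det(D_j(\pi)) : \area(\pi) = d_1,\ c_m(\lambda(\pi)) = d_2\}$ spans $M^{(m)}_{d_1,d_2}$. My plan is to extend the staircase-and-grafting machinery of Sections 3 and 5. Given an arbitrary product $\prod_j \Delta(D_j')$ with $D_j' \in \mathfrak{D}_n$, I would first normalize each $D_j'$ into a minimal staircase form by iterated applications of the Transfactor Lemma~\ref{lem:transfactor}; then apply a pairwise-extended version of the Grafting Lemma~\ref{lem:grafting} to migrate individual lattice points between any two of the $m$ factors, modulo $\mathfrak{m}_n I_n^m$, until the configuration realizes the cyclic column-to-factor assignment $j + mi \mapsto D_j$ dictated by a specific Dyck path $\pi$. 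The Higher Transfactor Lemma~\ref{lem:higher transfactor} should enter when rebalancing bidegrees across factors becomes necessary.

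The hardest step is this normalization, for two reasons. First, the Grafting Lemma is proved only for two factors and must be generalized so that points can be exchanged between any pair of the $m$ factors; this is plausible by telescoping two-factor grafts, but one must take care to avoid producing a repeated point within some $D_j'$, which would annihilate the corresponding determinant. Second, counting-wise, the conjecture implies $\dim M^{(m)}_{d_1,d_2} \le |\{\pi \in \mathcal{D}_n^{(m)} : \area(\pi) = d_1,\ c_m(\lambda(\pi)) = d_2\}|$, and summing over bidegrees yields $\dim M^{(m)} \le |\mathcal{D}_n^{(m)}|$, which is one half of the long-open equivalence of definitions (c) and (f); thus a full resolution is at least as hard as that open problem. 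In the bidegree regimes covered by Theorem~\ref{thm:bound2} and Corollary~\ref{02102011vv}, however, $\dim M^{(m)}_{d_1,d_2}$ is known explicitly as a partition number, so in those ranges the conjecture should be verifiable unconditionally by matching the partition count $p(\min(d_1,d_2), k)$ against a direct enumeration of Dyck paths with the prescribed statistics.
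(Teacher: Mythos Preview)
The paper does not prove this statement: it appears in \S6 as a conjecture, and the very next sentence observes that it would imply the still-open equality between the combinatorial and algebraic higher $q,t$-Catalan numbers (equation~\eqref{eq:comb-eqs} $=$ equation~\eqref{eq:alg-eqs}). So there is no proof in the paper to compare your proposal against.

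Your outline is sensible as far as it goes, and you yourself put your finger on the genuine obstruction. Your bidegree computation is correct: $\sum_i a_i(\pi)=\area(\pi)=\area^c(\lambda(\pi))$ and $\sum_i b_i(\pi)=c_m(\lambda(\pi))$, so if the conjectured elements span $M^{(m)}$ then $\dim M^{(m)}_{d_1,d_2}$ is bounded by the number of $\pi$ with those statistics, and summing over bidegrees forces $AC_n^{(m)}=PC_n^{(m)}$. That is precisely the open problem the paper flags, so a complete proof along your lines cannot be easier than settling it.

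The concrete gap is in the ``normalization'' step. Lemmas~\ref{lem:transfactor}, \ref{lem:grafting}, and \ref{lem:higher transfactor} are proved only in regimes where $k=m\binom{n}{2}-d_1-d_2$ is small compared to $n$ (so that the block-diagonal forms have many $1\times 1$ blocks to shuffle, cf.\ Lemma~\ref{lem:k}). For a generic bidegree, $k$ is on the order of $m\binom{n}{2}$, the staircase forms have large blocks, and none of these tools apply; extending them to that range is not a technicality but the heart of the problem. A smaller issue you pass over: it is not a priori clear that the $n$ points $(a_{j+mi}(\pi),b_{j+mi}(\pi))$ are pairwise distinct for every $\pi$ and every $j$, and without that $\Delta(D_j(\pi))$ is zero. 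Your last paragraph, restricting to the ranges of Theorem~\ref{thm:bound2} and Corollary~\ref{02102011vv}, is essentially the extent of what the paper actually establishes.
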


Note that this conjecture implies $(\ref{eq:comb-eqs})=(\ref{eq:alg-eqs})$. As a matter of fact, not only the generators of $I_n^{(m)}$ but also their syzygies
   have conjecturally nice combinatorial interpretations. For instance, if $m=1$ then we have the following conjecture. A more generalized
   version for $m\geq 1$ will appear elsewhere, as its statement requires a number of definitions including trapezoidal lattice paths in \cite{L0}.

\begin{conj}\label{conj12301}
Let $I_n$ be the ideal generated by alternating polynomials in
$R =\C[\mathbf{x},\mathbf{y}] =\C[x_1,y_1,\ldots,x_n,y_n]$.
Then for each $1\leq i\leq n$, the bigraded Hilbert series of
$$
\emph{Tor}_i(R/I_n,  \C) = \emph{Tor}_i(R/I_n, R/\mathfrak{m})
$$
is equal to
$$
(-1)^{i-1}\sum_{\begin{array}{c}\lambda\vdash n  \\
\emph{spin}(\lambda')=i-1  \end{array}} \langle (s_1)^n, s_{\lambda} \rangle
\langle \nabla(s_{\lambda}  ), s_{(1^n)}\rangle.
$$
(Recall that $\langle (s_1)^n, s_{\lambda}\rangle=f^{\lambda}$,
the number of standard Young tableaux of shape $\lambda$. For definition of spin, see p.6 in \cite{LW2}.)
\end{conj}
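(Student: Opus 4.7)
The plan is to identify $\mathrm{Tor}_i(R/I_n,\C)$ as the $i$-th term in a minimal bigraded $S_n$-equivariant free resolution of $R/I_n$, and to match its bigraded Hilbert series to the right-hand side through a representation-theoretic decomposition. Because $R$ and $I_n$ both carry commuting bigradings and the diagonal $S_n$-action, each $\mathrm{Tor}_i(R/I_n,\C)$ is a bigraded $S_n$-module with a well-defined bigraded Frobenius characteristic $F_i(q,t)\in\Lambda_{\Q(q,t)}$. Writing $F_i(q,t)=\sum_{\lambda\vdash n}c_\lambda^{(i)}(q,t)\,s_\lambda$ reduces the conjecture to showing
\[
c_\lambda^{(i)}(q,t)=\begin{cases}(-1)^{i-1}\langle\nabla(s_\lambda),s_{(1^n)}\rangle, & \text{if }\mathrm{spin}(\lambda')=i-1,\\ 0, & \text{otherwise},\end{cases}
\]
because the bigraded Hilbert series is then recovered as $\sum_\lambda f^\lambda c_\lambda^{(i)}(q,t)$. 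Note that this already encodes a nontrivial sign prediction: $\langle\nabla(s_\lambda),s_{(1^n)}\rangle$ must itself have sign $(-1)^{i-1}$ on the locus $\mathrm{spin}(\lambda')=i-1$, a fact that would have to be established (or imported) along the way.

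The first concrete step is to pin down the case $i=1$. By the previous conjecture, the minimal generators of $I_n$ are the explicit alternating polynomials $\det D(\pi)$ indexed by $\pi\in\mathcal{D}_n^{(1)}$, a Catalan-many family. The isotypic decomposition of $\mathrm{Tor}_1(R/I_n,\C)$ as an $S_n$-module would then follow from the action of $S_n$ on this generating set, and one expects the resulting bigraded Frobenius characteristic to agree with $\sum_\lambda\langle\nabla(s_\lambda),s_{(1^n)}\rangle\,s_\lambda$ summed over $\lambda$ with $\mathrm{spin}(\lambda')=0$. Matching this against the Schur expansion of $\nabla(e_n)$, whose alternating-component inner product gives $SC_n^{(1)}(q,t)$, is a finite combinatorial check that simultaneously fixes the normalization and verifies the conjecture in homological degree one.

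For the higher syzygies $i\ge 2$ I would proceed inductively: once the generators and first syzygies are in hand, the kernel of $F_1\to F_0$ is again a bigraded $S_n$-submodule of a free module, whose minimal generators compute $\mathrm{Tor}_2$, and so on. The appearance of spin strongly hints at a ribbon-tableaux/LLT interpretation, since spin is the natural $q$-statistic arising in the expansion of various operators built from $\nabla$. A potentially more efficient route is to compute the Euler characteristic first: the alternating sum $\sum_{i}(-1)^{i-1}F_i(q,t)$ equals the bigraded Frobenius characteristic of $R/I_n$ divided by that of $R$, and one could aim to match it termwise with the total right-hand side via plethystic manipulations of $\nabla$, after which the spin grading would be used to separate contributions by homological degree.

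The main obstacle will be establishing the exact correspondence between homological degree $i$ and $\mathrm{spin}(\lambda')=i-1$. The bigraded minimal resolution of $R/I_n$ has no known closed form, and extracting a spin-graded decomposition requires either an explicit model for the resolution---perhaps via an equivariant Koszul or BGG-type complex attached to the fibers of $\mathrm{Hilb}^n(\C^2)\to\mathrm{Sym}^n(\C^2)$---or a sharpening of shuffle-theorem machinery to control the ribbon-spin decomposition of $\nabla(s_\lambda)$ in a way compatible with $S_n$-equivariant homological algebra. Both routes appear to demand genuinely new input beyond the techniques developed in this paper, which is consistent with the statement being offered only as a conjecture.
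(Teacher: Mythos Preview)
The paper does not prove this statement: it is explicitly stated as a conjecture in \S6, with the only supporting evidence being the remark ``This conjecture is verified for $n\leq 6$.'' There is no argument in the paper to compare your proposal against.

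Your write-up is not a proof either, and to your credit you say so in the final paragraph. What you have produced is a reasonable strategic outline: reformulate the Hilbert series as $\sum_\lambda f^\lambda c_\lambda^{(i)}(q,t)$ via the Frobenius characteristic, settle $i=1$ using the (also conjectural) minimal generating set from Conjecture~6.1, and then attempt an inductive or Euler-characteristic attack on higher $i$. Two cautions are worth recording. First, your reformulation in terms of the $c_\lambda^{(i)}(q,t)$ is strictly stronger than the stated conjecture, which concerns only the ungraded-by-$S_n$ Hilbert series; matching Hilbert series does not force each $c_\lambda^{(i)}$ to vanish off the locus $\mathrm{spin}(\lambda')=i-1$, so calling this a ``reduction'' overstates the logic. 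Second, your base case $i=1$ already rests on Conjecture~6.1, which is itself open, so even the first step is conditional. None of this is a flaw in the sense of a mistaken argument---it simply confirms that the statement remains a conjecture and that the paper offers no proof for you to match.
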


This conjecture is verified for $n\leq 6$.  As a special case, we have:
 \begin{conj}\label{conj12302}
The bigraded  Hilbert series of $I_n$ is
$$
\frac{1}{(1-q)^n(1-t)^n}\langle \nabla(s_1^n), s_{(1^n)}\rangle.
$$
\end{conj}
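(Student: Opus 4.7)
The plan is to derive Conjecture \ref{conj12302} as an immediate consequence of Conjecture \ref{conj12301} via the standard Euler-characteristic relation between Tor-modules and Hilbert series of a minimal free resolution; the paper asserts that \ref{conj12301} is verified for $n \le 6$, and this reduction turns the Hilbert-series statement into a short by-product.

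First, I would start from the short exact sequence $0 \to I_n \to R \to R/I_n \to 0$, which gives
$$\text{Hilb}(I_n; q, t) = \text{Hilb}(R; q, t) - \text{Hilb}(R/I_n; q, t), \qquad \text{Hilb}(R; q, t) = \frac{1}{(1-q)^n(1-t)^n}.$$
Next, pick a minimal bigraded free resolution $F_\bullet \to R/I_n \to 0$ over $R$ (of length at most $2n$ by Hilbert's syzygy theorem; Conjecture \ref{conj12301} in effect refines this bound to $n$). The identification $\text{Tor}_i^R(R/I_n, \C) \cong F_i \otimes_R \C$ for a minimal resolution, together with additivity of the Hilbert series along the resolution, yields
$$\text{Hilb}(R/I_n; q, t) = \text{Hilb}(R; q, t) \cdot \sum_{i \ge 0} (-1)^i \,\text{Hilb}\bigl(\text{Tor}_i^R(R/I_n, \C); q, t\bigr).$$
Since $\text{Tor}_0^R(R/I_n, \C) = R/I_n \otimes_R \C = \C$ contributes $1$, subtracting the two displays gives
$$\text{Hilb}(I_n; q, t) = \text{Hilb}(R; q, t) \cdot \sum_{i \ge 1} (-1)^{i-1} \,\text{Hilb}\bigl(\text{Tor}_i^R(R/I_n, \C); q, t\bigr).$$

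Now substitute the formula of Conjecture \ref{conj12301}. The two factors of $(-1)^{i-1}$ cancel, and summing over $i \ge 1$ partitions the set of $\lambda \vdash n$ according to $\text{spin}(\lambda')$, so each $\lambda$ contributes exactly once:
$$\sum_{i \ge 1}(-1)^{i-1}\,\text{Hilb}(\text{Tor}_i) \;=\; \sum_{\lambda \vdash n} f^\lambda \,\langle \nabla(s_\lambda), s_{(1^n)}\rangle \;=\; \bigl\langle \nabla\bigl(\textstyle\sum_{\lambda \vdash n} f^\lambda s_\lambda\bigr),\, s_{(1^n)}\bigr\rangle$$
by $F$-linearity of $\nabla$. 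Invoking the classical Schur expansion $s_1^n = p_1^n = \sum_{\lambda \vdash n} f^\lambda s_\lambda$ (the Frobenius characteristic of the regular representation of $S_n$) then finishes the derivation, producing exactly $\text{Hilb}(I_n; q, t) = \langle \nabla(s_1^n), s_{(1^n)}\rangle / \bigl((1-q)^n(1-t)^n\bigr)$.

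The main obstacle is, of course, that Conjecture \ref{conj12301} is itself open beyond $n \le 6$, so the argument above is only a \emph{reduction}. To make it unconditional one would have to construct the minimal bigraded free resolution of $R/I_n$ explicitly (or at least compute its bigraded Betti numbers combinatorially) and verify term-by-term the spin-indexed Schur-function description of each $\text{Tor}_i^R(R/I_n, \C)$. An alternative, possibly more tractable, route is to attack only the alternating sum in the derivation above --- that is, to establish the weaker identity
$$\sum_{i \ge 1}(-1)^{i-1}\,\text{Hilb}\bigl(\text{Tor}_i^R(R/I_n, \C); q, t\bigr) \;=\; \langle \nabla(s_1^n), s_{(1^n)}\rangle$$
by a direct plethystic manipulation of the known bigraded Frobenius series $\text{Frob}(R; q, t) = h_n[X/((1-q)(1-t))]$, thereby avoiding the finer Tor-by-Tor identification. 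Even this coarser version, however, appears to require a nontrivial $\nabla$-identity, which seems to be the real source of difficulty in Conjecture \ref{conj12302}.
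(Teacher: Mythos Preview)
Your reduction is correct and is essentially the same as the paper's: the paper simply states that Conjecture~\ref{conj12302} follows from Conjecture~\ref{conj12301} because $s_1^n=\sum_{\lambda\vdash n}\langle (s_1)^n, s_{\lambda}\rangle s_{\lambda}$, leaving the Euler-characteristic/minimal-free-resolution step implicit, whereas you spell out that homological algebra in full. Both arguments are conditional on Conjecture~\ref{conj12301}, as you correctly note.
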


Conjecture~\ref{conj12302} follows from Conjecture~\ref{conj12301}, because $s_1^n=\sum_{\lambda\vdash n}\langle (s_1)^n, s_{\lambda} \rangle s_{\lambda}$.

\section{Appendix: Comparison of Definitions of Higher $q,t$-Catalan Numbers}
In the Introduction, we gave seven definitions (a)--(g) of
the higher $q,t$-Catalan numbers. Here we explain the known relations
among these definitions.

(a)$\Leftrightarrow$(b): There is an obvious bijection
between partitions $\lambda\in\Par_n^{(m)}$ and
$m$-Dyck words $\gamma\in\Gamma_n^{(m)}$, defined as follows.
Given the partition $\lambda$, embed the diagram of $\lambda$
in an $mn\times n$ triangle as shown in Figures~\ref{fig.1}
and~\ref{fig:bounce}.
For $0\leq i<n$, let $\gamma_i$ be the number of complete squares
to the right of $\lambda$ and to the left of the diagonal
in the $(i+1)$'th row from the bottom. For example, when $m=2$,
$n=5$, and $\lambda=(7,5,4)$, we see from Figure~\ref{fig:bounce}
that the associated $2$-Dyck word is $\gamma=(0,2,0,1,1)$.
It is routine to verify that this process defines a bijection
from $\Par_n^{(m)}$ onto $\Gamma_n^{(m)}$ such that
$\area^c(\lambda)=\area(\gamma)$. It is less routine to prove
that $c_m(\lambda)=\dinv_m(\gamma)$; see~\cite[Lemma 6.3.3]{HHLRU}
for the proof. (Note that what we call $c_m(\lambda)$ is
called $b_m(\lambda)$ in~\cite{HHLRU}.)

(b)$\Leftrightarrow$(c): See \cite[\S2.5]{L}
for a bijection from $\Gamma_n^{(m)}$ to $\mathcal{D}_n^{(m)}$
such that if $\gamma$ maps to $\pi$ under the bijection,
then $\area(\gamma)=b_m(\pi)$ and $\dinv_m(\gamma)=\area(\pi)$.
This proves $WC_n^{(m)}(q,t)=DC_n^{(m)}(q,t)$.
On the other hand, it is an open problem to define a bijection
$\gamma\mapsto\pi$ from
$\Gamma_n^{(m)}$ to $\mathcal{D}_n^{(m)}$ satisfying
$\area(\gamma)=\area(\pi)$ and $\dinv_m(\gamma)=b_m(\pi)$.
This problem is equivalent to proving bijectively that
the combinatorial definitions (a), (b), and (c) are
symmetric in $q$ and $t$.

(d)$\Leftrightarrow$(e): One can use well-known facts about
Macdonald polynomials to prove that $SC^{(m)}_n(q,t) = RC^{(m)}_n(q,t)$
(cf.~\cite{GH} and~\cite{CL}).
Indeed, since $e_n=\sum_{\mu\vdash n} ((1-q)(1-t)B_\mu\Pi_\mu/w_\mu)\tilde{H}_\mu$ and
$\nabla(\tilde{H}_\mu)=T_\mu\tilde{H}_\mu$, linearity of $\nabla$ gives
$\nabla^m(e_n)=\sum_{\mu\vdash n} ((1-q)(1-t)T_\mu^m B_\mu\Pi_\mu/w_\mu)\tilde{H}_\mu$.
Since $\langle\tilde{H}_\mu,e_n\rangle=T_\mu$, we can conclude that
$\langle\nabla^m(e_n),e_n\rangle =
\sum_{\mu\vdash n}(1-q)(1-t)T_\mu^{m+1}B_\mu\Pi_\mu/w_\mu$, as desired.

(d)$\Leftrightarrow$(f):
Let $J$ be the ideal in $\C[\mathbf{x},\mathbf{y}]$
generated by polarized power sums $\sum_{i=1}^n x_i^hy_i^k$ ($h+k\ge 1$).
One can also describe $J$ as the ideal generated by all $S_n$-invariant
polynomials without constant term, where $S_n$ acts diagonally~\cite{H94}.
Let $\varepsilon$ be the sign representation of $S_n$. It is proved in
\cite[Proposition 6.1.1]{HHLRU}
that $$\nabla^m(e_n(z_1,z_2,\ldots))=
\mathcal{F}_{\varepsilon^{m-1}\otimes I^{m-1}/JI^{m-1}}(z_1,z_2,\ldots;q,t),$$
where the right side denotes
the Frobenius series of  $\varepsilon^{m-1}\otimes I^{m-1}/JI^{m-1}$.
(Note that the meanings of $I$ and $J$ are switched
 in~\cite{HHLRU}.)
On the other hand, one may check that the $S_n$-alternating part
$\big{(}\varepsilon^{m-1}\otimes I^{m-1}/JI^{m-1} \big{)}^\varepsilon$
is isomorphic to $\varepsilon^{m-1}\otimes I^{m}/{\mathfrak{m}}I^{m}$.
We can extract the $S_n$-alternating part from the Frobenius series
by taking the scalar product with $e_n=s_{(1^n)}$.  Therefore,
$$\aligned
SC^{(m)}_n(q,t)&=\langle \nabla^m(e_n),e_n\rangle
=\sum_{u,v\geq 0}q^ut^v\dim(\varepsilon^{m-1}\otimes
I^{m}/{\mathfrak{m}}I^{m})_{u,v}
\\ &=\sum_{u,v\geq 0}q^ut^v\dim(I^{m}/{\mathfrak m}I^{m})_{u,v}
=\sum_{u,v\geq 0}q^ut^v\dim M^{(m)}_{u,v}=AC^{(m)}_n(q,t).
\endaligned$$

(e)$\Leftrightarrow$(g): Haiman showed the identity
$$RC^{(m)}_n(q,t)=\sum_{i=0}^{n-1}(-1)^itr_{H^i(Z_n,\mathcal{O}(m))}(q,t)$$
in \cite[\S3, Theorem 2]{H98}.
Then he showed that for $i>0$ and $l\ge 0$,
$H^i(Z_n, P\otimes B^{\otimes l})=0$, where $P$ and $B$
are the vector bundles defined in \cite[\S2]{H02}.
In particular, this implies $H^i(Z_n, \mathcal{O}(k))=0$ for $i>0$
\cite[Introduction and Theorem 2.2]{H02}. Therefore
 $RC^{(m)}_n(q,t)=tr_{H^0(Z_n,\mathcal{O}(m))}(q,t)$,
which is exactly $GC^{(m)}_n(q,t)$.

\end{document}